\newcommand \datum {\magenta{June 12, 2021}}
\numberwithin{equation}{section}
\theoremstyle{plain}
 \newtheorem{theorem}{Theorem}[section]
 \newtheorem{lemma}[theorem]{Lemma}
 \newtheorem{proposition}[theorem]{Proposition}
 \newtheorem{corollary}[theorem]{Corollary}
\theoremstyle{definition}
\newcommand \tbf[1]  {\textbf{#1}} 
\newcommand \alg[1]  {\mathcal #1}
\newcommand \Ejoin[2] {E_{\textup{join}}(#1,#2)}
\newcommand \Emeet[2] {E_{\textup{meet}}(#1,#2)}
\newcommand \Nnul {\mathbb N_0}
\newcommand \Jir [1] {\textup J(#1)} 
\newcommand \length [1] {\textup{length}(#1)} 
\renewcommand \dim [1] {\textup {dim}_{\textup{ord}}(#1)}
\renewcommand \phi {\varphi}
\newcommand \width [1] {\textup {width}(#1)}
\newcommand \Nplu {\mathbb N^+}
\newcommand \set [1]{\{#1\}}
\newcommand \Dnfin[1] {\alg D(#1)}
\newcommand \Dncovzo [1] {\alg {D}_{\textup{01$\prec$}}(#1)}
\newcommand \covzo {cover-preserving $\set{0,1}$-homomorphism}
\newcommand \embcovzo {cover-preserving $\set{0,1}$-embedding}
\newcommand \Dall  {\alg D_{\textup{all}}}
\newcommand \ideal {\mathord\downarrow}
\newcommand \filter {\mathord\uparrow}
\newcommand \Ep [1] {E^+_{#1}}
\newcommand \cj [2] {#1[#2]}
\newcommand \ata [2] {a^{(#1)}_{#2}}
\newcommand \atom[1] {\ata{#1}1}
\newcommand \eqeqref [1] {\overset{(\ref{#1})}=}
\newcommand \diag [1] {\Delta_{#1}}
\newcommand \restrict[2] {#1\lceil_{#2}}
\newcommand \id [1] {\textup{id}_{#1}}
\newcommand \Hop  {\mathbf H}
\newcommand \magenta [1] {{\color{magenta}#1\color{black}}}
\begin{document}

\title[Absolute retracts for two classes of lattices]
{{Absolute retracts for finite distributive lattices and slim semimodular lattices}}

\author[G.\ Cz\'edli]{G\'abor Cz\'edli}
\email{czedli@math.u-szeged.hu}
\urladdr{http://www.math.u-szeged.hu/~czedli/}
\address{ Bolyai Institute, University of Szeged, Hungary 
}

\author[A.\ Molkhasi]{Ali Molkhasi}
\email{molkhasi@gmail.com , molkhasi@cfu.ac.ir}
\address{Department of Mathematics, Farhangian University of Iran, Tabriz}

\begin{abstract} 
{We describe the absolute retracts for the following classes of finite lattices: (1) slim semimodular lattices, (2) finite distributive lattices, and for each positive integer $n$, (3) at most $n$-dimensional finite distributive lattices. 
Although the singleton lattice is the only absolute retract
for the first class, this result has paved the way to some other classes. For the second class, we prove that the absolute retracts are exactly the finite boolean lattices; this generalizes a 1979 result of J. Schmid. 
For the third class, the absolute retracts are the finite boolean lattices of dimension at most $n$ and the direct products of $n$ nontrivial finite chains.
Also, we point out that in each of these classes, the algebraically closed lattices and the strongly algebraically closed lattices are the same as the absolute retracts.} 

{Slim (and necessarily planar) semimodular lattices were introduced by G.\ Gr\"atzer and E.\ Knapp in 2007, and  they have been intensively studied since then. 
Algebraically closed and strongly algebraically closed lattices have been investigated by J. Schmid and, in several papers, by  A. Molkhasi.} 
\end{abstract}

\thanks{This research of the first author was supported by the National Research, Development and Innovation Fund of Hungary under funding scheme K 134851.}

\subjclass {06C10}

\keywords{Absolute retract,  slim  semimodular lattice, algebraically closed lattice, strongly algebraically closed lattice}

\date{\tbf{{\datum}.} 
\magenta{An earlier 8-page version was entitled \emph{``Is there an absolute retract for the class of slim semimodular lattices?''}} (Hint: check the author's website for possible updates)  }

\maketitle

\section{Introduction}\label{sect:intro}
Before formulating our targets and results  in Subsection~\ref{subsect-target}, we give a short historical overview.
The history leading to the present work belongs to {four} topics, which are surveyed in the following {four} subsections. 
According to our knowledge, {the first three of these four} topics have been studied independently so far; one of our goals is to find some connection among them.

\subsection{Strongly algebraically closed algebras {in categories of algebras}} 
By an \emph{equation} in an algebra $A$ we mean a formal expression
\begin{equation*}
p(a_1,\dots,a_m, x_1,\dots, x_n)\approx q(a_1,\dots,a_m, x_1,\dots, x_n)
\end{equation*}
where $m\in\Nnul=\set{0,1,2,\dots}$, $n\in\Nplu=\Nnul\setminus\set 0$,
$p$ and $q$ are $(m+n)$-ary terms (in the language of $A$), the elements $a_1,\dots,a_m$ belong to $A$ and they are called  \emph{parameters} (or \emph{coefficients}), and 
$x_1,\dots, x_n$ are the \emph{unknowns} of this equation. Although a single equation contains only finitely many unknowns, we allow infinite \emph{systems} (that is, sets) of equations and such a system can contain infinitely many unknowns. 

{By a \emph{category of algebras} we mean a concrete category $\alg X$ such that the objects of $\alg X$ are algebras of the same type,  every morphism of $\alg X$ is a homomorphism, and whenever $A_1$ and $A_2$  are isomorphic algebras such that $A_1$ belongs to $\alg X$, then so does $A_2$.
Note that there can be homomorphisms among the objects of $\alg X$ that are not morphisms of $\alg X$. 
If $\alg X$ happens to contain all homomorphisms among its objects as morphisms, then  $\alg X$ is a \emph{class of algebras $($with all homomorphisms$)$}; the parenthesized part of this term is often dropped in the literature.  Given a category $\alg X$ of algebras and objects $A,B$ in $\alg X$, we say that $B$ is an \emph{$\alg X$-extension} of $A$ if $A$ is a subalgebra of $B$ and, in addition, the map $\iota\colon A\to B$ defined by $x\mapsto x$ is a morphism in $\alg X$. (If $\alg X$ is a class of algebras with all homomorphisms and $A,B\in \alg X$, then ``extension" is the same as ''$\alg X$-extension".)}

{Note that the concept of ``$B$ is an $\alg X$-extension of $A$''
includes not only $A$ and $B$, but also the embedding  $\iota\colon A\to B$ defined by $x\mapsto x$. Therefore, when we speak of ``all $\alg X$-extensions of $A$'', then the meaning is  that all possible embeddings $\iota$ are considered. For example, if $A$ is the two-element chain in the class $\alg L$ of lattices with all homomorphisms, then $A$ has three essentially different $\alg L$-extensions  into a three-element chain. }

For {a category $\alg X$ of algebras and} an  algebra $A\in\alg X$, 
we say that $A$ is \emph{strongly algebraically closed in}  $\alg X$ if for every {$\alg X$-extension} $B\in \alg X$  of $A$ and for any system $\Sigma$ of equations with parameters taken from $A$, if $\Sigma$ has a solution in $B$, then it also has a solution in $A$. 
Following Schmid~\cite{schmid}, if we replace ``any system $\Sigma$'' by ``any finite system $\Sigma$'', then we obtain the concept of an \emph{algebraically closed algebra} $A$ in  $\alg X$. 
These two concepts have been studied by many authors;
restricting ourselves to lattice theory, we only mention
Schmid~\cite{schmid} and 
Molkhasi~\cite{molkhasi16,molkhasi18a,molkhasi18b,molkhasi20}.

\subsection{Absolute retracts} Given an algebra $B$ and a subalgebra $A$ of $B$, we say that $A$ is a \emph{retract} of $B$ if there exists a homomorphism $f\colon B\to A$ such that $f(a)=a$ for all $a\in A$. 
 The homomorphism $f$ in this definition is called a \emph{retraction map} or a \emph{retraction} for short.

Now let $A$ be {an algebra belonging to a category $\alg X$ of algebras. 
We say that $A$ is an \emph{absolute retract for}  $\alg X$ if for 
any $\alg X$-extension $B$ of $A$, there exists a retraction $B\to A$ among the morphisms of $\alg X$. 
Similarly, $A$ is an \emph{absolute $\Hop$-retract for}  $\alg X$ if for any $\alg X$-extension $B$ of $A$, there exists a retraction $f\colon B\to A$ (but $f$ need not be a morphism of $\alg X$). The letter $\Hop$ in this terminology comes from ``homomorphism''. Although an absolute $\Hop$-retract is not a purely category theoretical notion, it helps us to state some of our assertions in a stronger form. Note that if $A$ is an absolute retract for $\alg X$, then it is also and absolute $\Hop$-retract for $\alg X$. Observe that
\begin{equation}
\parbox{8.5cm}{for a class $\alg X$ of algebras with all homomorphisms, absolute $\Hop$-retracts and absolute  retracts are the same.}
\label{pbx:rmDhwhspKlQB}
\end{equation} 
}

Absolute retracts emerged first in topology, and they  appeared in classes of algebras as soon as 1946; see Reinhold~\cite{reinhold}.
There are powerful tools to deal with homomorphisms and, in particular, retractions in {several categories of lattices}; 
we will benefit from these tools in Sections \ref{sect-thm} {and \ref{sect:dist}}.

\subsection{Slim semimodular lattices}
{For a finite lattice $L$, let $\Jir L$ stand for the set of nonzero join-irreducible elements of $L$. Note that $\Jir L$
is a poset (i.e., partially ordered set) with respect to the order inherited from $L$.}
Following Cz\'edli and Schmidt \cite{czgschtJH}, we say that a lattice is \emph{slim} if it is finite and $\Jir L$  is the union of two chains. 
Note that slim lattices are planar; see Lemma 2.2 of Cz\'edli and Schmidt \cite{czgschtJH}. As usual, a lattice $L$ is (upper) \emph{semimodular} if we have $x\vee z\preceq y\vee z$ for any $x,y,z\in L$ such that 
$y$ covers or equals $x$ (in notation, $x\preceq y$).
Since the pioneering paper Gr\"atzer and Knapp \cite{gratzerknapp1},
recent years have witnessed a particularly intense activity in studying 
\emph{slim semimodular lattices}; see 
Cz\'edli~\cite{czgreprhomr,czgmatrix,czgtrajcolor,czganotesps,czgrectectdiag,czgqplanar,czgasymp,czgaxiombipart},
Cz\'edli, D\'ek\'any, Gyenizse and Kulin~\cite{czgdgyk},
Cz\'edli, D\'ek\'any, Ozsv\'art, Szak\'acs, Udvari~\cite{czg--nora},
Cz\'edli and Gr\"atzer~\cite{czgggltsta,czgggresections,czgginprepar},
Cz\'edli,  Gr\"atzer, and Lakser \cite{czggghlswing},  
Cz\'edli and Makay~\cite{czgmakay},
Cz\'edli and Schmidt~\cite{czgschtJH,czgschtvisual,czgschtcompser,czgschtpatchwork}, Gr\"atzer~\cite{ggonaresczg,ggswinglemma,ggtwocover,ggSPS8},  Gr\"atzer and Knapp \cite{gratzerknapp3}, and Gr\"atzer and Nation~\cite{gr-nation}.
For the impact of these lattices on (combinatorial) geometry, see {Adaricheva and Bolat~\cite{adaribolat},} Adaricheva and Cz\'edli~\cite{adariczg}, Cz\'edli~\cite{czgcircles}, and (the surveying) Section 2 of Cz\'edli and Kurusa~\cite{czgkurusa}, {and see their impact on lattice theory in Ranitovi\'c and Tepav\v cevi\'c \cite{andrejaranitovic,andrejamarijana}.}

\subsection{Finite and $n$-dimensional distributive lattices}
\label{subsect:Dfdim}
It is well known that a finite distributive lattice $D$ is determined by the poset $\Jir D$ up to isomorphism. 
Borrowing a definition from  Dushnik and Miller~\cite{dushnikmiller}, the \emph{order dimension}  of a poset $P=(P;\leq_P)$, denoted by $\dim P$, is the least number $n$ such that the relation $\leq_P$ is the intersection of $n$ linear orderings on $P$.  We know from Milner and Pouzet  \cite{milnerpouzet} that $\dim P$ is also the least number $n$ such that $P$ has an order embedding into the direct product of $n$ chains. 
The \emph{width} of a poset $P$ is defined to be the maximum size of an antichain in $P$; it will be denoted by $\width P$. 
By Dilworth \cite[Theorem  1.1]{dilworth}, a finite poset $P$ is of width $n$ if and only if $P$ is the union of $n$ (not necessarily disjoint) chains but not a union of fewer chains.
As it is pointed out in  the first paragraph of page 276 in Rabinovitch and Rival~\cite{rabinovitchrival}, it follows from Dilworth~\cite{dilworth} that 
\begin{equation}
\text{for a finite distributive lattice $D$, $\dim D=\width{\Jir D}$.}
\label{eqtxt:dimDwJD}
\end{equation}
If $\dim D=n$, then $D$ is said to be \emph{$n$-dimensional}.

\subsection{Targets and results}\label{subsect-target} 
First, we are going to prove the following easy proposition. By a finite algebra we mean a finite nonempty set equipped with \emph{finitely many} operations.

\begin{proposition}\label{prop}
If $A$ is an algebra in a {category} $\alg X$ of algebras, then the following two conditions are equivalent.
\begin{enumerate}
\item\label{propa} $A$ is strongly algebraically closed in $\alg X$.
\item\label{propb} $A$ is an absolute {$\Hop$-}retract for $\alg X$.  
\end{enumerate}
Furthermore, if $\alg X$ consists of finite algebras, then each of (\ref{propa}) and  (\ref{propb}) is equivalent to 
\begin{enumerate}\setcounter{enumi}{2}
\item\label{propc} $A$ is algebraically closed in $\alg X$.
\end{enumerate} 
\end{proposition}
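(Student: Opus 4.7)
The plan is to handle the two equivalences separately, reserving a compactness argument for the finite case.

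For $(\ref{propa})\Rightarrow(\ref{propb})$, given an $\alg X$-extension $B$ of $A$, I would form the \emph{diagram system} $\Sigma_B$ of equations as follows: introduce an unknown $x_b$ for every $b\in B\setminus A$, and for each fundamental operation $f$ and each tuple $(c_1,\dots,c_k)\in B^k$, add the equation $f(y_1,\dots,y_k)\approx y_0$, where $y_i$ is the parameter $c_i$ if $c_i\in A$ and the unknown $x_{c_i}$ otherwise, and $y_0$ encodes $f^B(c_1,\dots,c_k)$ in the same manner. The assignment $x_b\mapsto b$ solves $\Sigma_B$ in $B$, so by $(\ref{propa})$ some assignment $x_b\mapsto r(b)\in A$ also solves it. Extending by $r(a):=a$ for $a\in A$ produces a map $r\colon B\to A$; the equations of $\Sigma_B$ say exactly that $r$ commutes with every fundamental operation, so $r$ is the desired retraction in $\alg X$ (as an abstract homomorphism, hence $\Hop$-retract).

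For $(\ref{propb})\Rightarrow(\ref{propa})$, let $\Sigma$ be a system with parameters from $A$ that has a solution in some $\alg X$-extension $B$, and let $r\colon B\to A$ be the retraction furnished by $(\ref{propb})$. Applying the homomorphism $r$ coordinatewise to the $B$-solution yields an $A$-solution of $\Sigma$, since parameters are fixed by $r$ and each side of each equation is a term value, which $r$ preserves.

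Now assume $\alg X$ consists of finite algebras. Clearly $(\ref{propa})\Rightarrow(\ref{propc})$. For $(\ref{propc})\Rightarrow(\ref{propa})$, let $\Sigma$ be a system in unknowns $\{x_i:i\in I\}$ with parameters in $A$ that has a solution in an $\alg X$-extension $B$ of $A$. Every finite subsystem $F\subseteq\Sigma$ then has a solution in $B$, hence by $(\ref{propc})$ a solution in the finite algebra $A$. Equip $A^I$ with the product topology, using the discrete topology on the finite set $A$; by Tychonoff this space is compact. For each finite $F\subseteq\Sigma$, the set $S_F\subseteq A^I$ of $I$-tuples satisfying $F$ is clopen (it depends on finitely many coordinates), and the family $\{S_F:F\subseteq\Sigma\text{ finite}\}$ has the finite intersection property because $S_{F_1}\cap\dots\cap S_{F_k}=S_{F_1\cup\dots\cup F_k}\neq\emptyset$. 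Compactness therefore delivers a point in $\bigcap_F S_F$, which is a solution of $\Sigma$ in $A$. The compactness step is the only substantive piece of the proof; everything else is universal-algebraic bookkeeping.
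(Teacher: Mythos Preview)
Your argument for $(\ref{propa})\Leftrightarrow(\ref{propb})$ is essentially identical to the paper's: both build the ``diagram'' system of equations indexed by operation-tuples in $B$ and then push a $B$-solution through a retraction. The only difference is cosmetic (the paper writes out the case analysis for lattices explicitly).

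In the finite case you take a genuinely different route. The paper observes that when $\alg X$ consists of finite algebras (and, per the paper's convention, with finitely many operations) the diagram system $\Sigma_B$ itself is \emph{finite}, so $(\ref{propc})$ already yields a solution of $\Sigma_B$ in $A$, giving $(\ref{propc})\Rightarrow(\ref{propb})$ directly; the implication $(\ref{propb})\Rightarrow(\ref{propa})$ then closes the circle. You instead prove $(\ref{propc})\Rightarrow(\ref{propa})$ by a Tychonoff compactness argument on $A^I$. Both are correct. The paper's route is more elementary (no topology, no choice principle) and exploits exactly the finiteness of the extension $B$ that is available here. Your compactness argument is a standard and robust technique that would survive, for instance, dropping the finite-signature hypothesis, but it is heavier machinery than the situation requires: once you have already built $\Sigma_B$ for $(\ref{propa})\Rightarrow(\ref{propb})$, the observation that $\Sigma_B$ is finite gives $(\ref{propc})\Rightarrow(\ref{propb})$ for free.
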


This proposition will be proved in Section~\ref{sect:retr}.
Armed with Proposition \ref{prop}, we are going to prove the following result {in Section~\ref{sect-thm}}.

\begin{theorem}\label{thmsps}
Let $L$ be a slim semimodular lattice and let $\alg S$ denote the class of all slim semimodular lattices with all homomorphisms. Then the following four conditions are equivalent.
\begin{enumerate}
\item\label{thmnul} $L$ is algebraically closed in $\alg S$.
\item\label{thma} $L$ is strongly algebraically closed in $\alg S$.
\item\label{thmb} $L$ is an absolute retract for $\alg S$.
\item\label{thmd} $L$ is the one-element lattice, i.e., $|L|=1$.
\end{enumerate}
\end{theorem}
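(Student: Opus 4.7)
The plan is to combine Proposition~\ref{prop} with an explicit construction. Slim semimodular lattices are finite by definition, and $\alg S$ is a class of algebras with all homomorphisms, so by \eqref{pbx:rmDhwhspKlQB} absolute $\Hop$-retracts and absolute retracts coincide in $\alg S$. Proposition~\ref{prop} then yields (\ref{thmnul})$\Leftrightarrow$(\ref{thma})$\Leftrightarrow$(\ref{thmb}) at once, so only (\ref{thmb})$\Leftrightarrow$(\ref{thmd}) remains to prove.

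The direction (\ref{thmd})$\Rightarrow$(\ref{thmb}) is immediate: when $|L|=1$, the unique map $B\to L$ is a lattice homomorphism and a retraction for every $B\in\alg S$ containing $L$.

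For the contrapositive of (\ref{thmb})$\Rightarrow$(\ref{thmd}), I assume $|L|\ge 2$ and fix a covering pair $x\prec y$ in $L$. A retraction $f\colon B\to L$ extending the inclusion $L\hookrightarrow B$ corresponds to a congruence $\theta$ on $B$ for which $L$ is a transversal --- each $\theta$-block contains exactly one element of $L$ --- because in that case the composite $L\hookrightarrow B\twoheadrightarrow B/\theta$ is a lattice isomorphism, and composing $B\twoheadrightarrow B/\theta$ with its inverse realizes the retraction. My task therefore reduces to building a slim semimodular $B\supsetneq L$ that admits no such congruence $\theta$.

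To build such a $B$, I would attach to $L$ at the covering pair $x\prec y$ a small slim semimodular ``obstruction block''; a natural candidate is a fork extension in the sense of Cz\'edli and Schmidt (together with possibly adjoining covering squares above or below when $L$ itself has no $2\times 2$ sublattice through $[x,y]$), since fork extensions preserve slimness and semimodularity while adjoining new prime intervals projective to $[x,y]$. By iterating such extensions one can arrange that every congruence $\theta$ on $B$ whose restriction to $L$ is $\diag L$ either equals $\diag B$, in which case $|B/\theta|=|B|>|L|$ and transversality fails on cardinality grounds, or else, by propagation along the projectivity classes of prime intervals, collapses two distinct elements of $L$, again breaking transversality. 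The main obstacle is exactly this propagation argument: verifying that in the constructed $B$ every nontrivial congruence compatible with $\id L$ is forced to identify a pair inside $L$. This rests on the by-now-standard description of congruences of slim semimodular lattices in terms of trajectories of prime intervals developed by Cz\'edli, Gr\"atzer, and Schmidt, and on a careful bookkeeping of how the attached fork extensions feed perspectivities back into $L$.
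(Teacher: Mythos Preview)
Your reduction via Proposition~\ref{prop} and the trivial implication (\ref{thmd})$\Rightarrow$(\ref{thmb}) are fine, and your reformulation of a retraction as a congruence $\theta$ on $B$ for which $L$ is a transversal is correct and useful. The gap is in the remaining step: you do not actually produce a $B$ that works, and the outline you give has two concrete problems.

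First, fork extensions are defined on $4$-cells, not on arbitrary covering pairs; see \eqref{pbx:sRctGlr}. Your parenthetical fix (``adjoining covering squares'' when $L$ has no $4$-cell through $[x,y]$) is not innocuous: you would need to show that such adjoints stay inside $\alg S$ and that the resulting trajectory structure really forces the propagation you want. Second, the dichotomy you state is not yet an argument. You assert that after iterating extensions, every nontrivial congruence of $B$ must collapse two elements of $L$, but this is precisely the content that needs proof, and you yourself flag it as ``the main obstacle''. Nothing in the sketch controls how many fork iterations are needed, or why the new prime intervals feed perspectivities back into $L$ rather than just into each other.

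The paper's route sidesteps both issues by \emph{not} modifying $L$ locally. Instead it embeds (an isomorphic copy of) $L$ as a sublattice of a much larger slim rectangular lattice $K$ built from the family $S_7^{(t)}$, where $t>|L|$. The point is that $K$ has $t$ distinguished inner coatoms $a_1,\dots,a_t$; by pigeonhole any congruence with only $|L|$ blocks must collapse some $a_i$ with $1$, and then Gr\"atzer's Swing Lemma forces \emph{all} the $a_j$, hence their meet $b$, into the block $1/\Theta$. Since the copy of $L$ sits in $[b,a_{m+1}]$, the whole of $L$ is collapsed, contradicting transversality. So the paper's key idea is a global counting argument (many coatoms, few blocks) rather than a local propagation from a single prime interval; this is what makes the Swing Lemma bite cleanly and avoids the case analysis your plan would require.
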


Since the singleton lattice does not look too exciting in itself, it is worth noting the following. First, we {know neither a really short proof of this theorem nor a proof without using some nontrivial tool from the theory of slim semimodular lattices}.   {Second,  Theorem~\ref{thmsps} together with Molkhasi~\cite{molkhasi16,molkhasi18a,molkhasi18b,molkhasi20}  and Schmid~\cite{schmid} have just motivated a related result with infinitely many absolute retracts for the class of slim semimodular lattices with less morphisms than here; see 
Cz\'edli \cite{czgpatchabsrectr}. 
Third and mainly, as it is explained in Subsection~\ref{subsect:nBtPrf},  Theorem~\ref{thmsps} and the tools needed to prove it have paved the way to  Theorem \ref{thmdst} below. To formulate it, let $\omega$ stand for the least infinite cardinal number, let $\Nplu:=\set{1,2,3,4,\dots}$,  and  
\begin{equation}
\parbox{7cm}{for $n\in\Nplu\cup\set{\omega}$, let $\Dnfin n$ denote  the class of \emph{finite} distributive lattices with order dimension at most $n$, with all homomorphisms.}
\label{txtDnfindef} 
\end{equation}
By a \emph{nontrivial} lattice we mean a lattice with more than one element.
}

\begin{theorem}[Main Theorem]\label{thmdst}
Let $n\in\Nplu\cup\set{\omega}$, see \eqref{txtDnfindef},  and let $D\in\Dnfin n$. Then the following four conditions are equivalent.
\begin{enumerate}
\item\label{thmdsta} $D$ is algebraically closed in $\Dnfin n$.
\item\label{thmdstb} $D$ is strongly algebraically closed in $\Dnfin n$.
\item\label{thmdstc} $D$ is an absolute retract for  $\Dnfin n$.
\item\label{thmdstd} $D$ is a boolean lattice or $D$ is the direct product of $n$ nontrivial finite chains. 
\end{enumerate} 
\end{theorem}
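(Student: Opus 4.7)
My plan is to first dispatch the equivalence (\ref{thmdsta}) $\Leftrightarrow$ (\ref{thmdstb}) $\Leftrightarrow$ (\ref{thmdstc}) immediately from Proposition~\ref{prop} together with observation~\eqref{pbx:rmDhwhspKlQB}, using that $\Dnfin n$ is a class of finite algebras with all homomorphisms. The substance is then the equivalence (\ref{thmdstc}) $\Leftrightarrow$ (\ref{thmdstd}), which I would attack via Birkhoff duality: $D \cong \mathcal O(P)$ where $P := \Jir D$, a $\set{0,1}$-preserving lattice embedding $\mathcal O(P) \hookrightarrow \mathcal O(Q)$ corresponds to a surjective order-preserving map $Q \twoheadrightarrow P$, and a $\set{0,1}$-preserving retraction corresponds to an order-preserving section of this surjection. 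By~\eqref{eqtxt:dimDwJD}, $E \in \Dnfin n$ translates to $\width Q \leq n$. A preliminary reduction handles arbitrary $\Dnfin n$-extensions $\iota\colon D \hookrightarrow E$: replace $E$ by the sublattice $E':=[\iota(0_D),\iota(1_D)]$, which remains in $\Dnfin n$ because $\Jir{E'}$ is a subposet of $\Jir E$, and extend any retraction $r'\colon E' \to D$ to $E$ via the lattice homomorphism $\pi(e):=(e \vee \iota(0_D))\wedge\iota(1_D)$ (well-defined by distributivity).

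For (\ref{thmdstd}) $\Rightarrow$ (\ref{thmdstc}), after this reduction I produce the section dually. If $D$ is boolean, then $P$ is an antichain and any choice function is vacuously order-preserving. If $D = C_1 \times \cdots \times C_n$ is a product of $n$ nontrivial chains, then $P$ is the disjoint union of $n$ chains with $\width P = n$; since $\width \Jir{E'} \leq n$ and every $n$-element antichain of $P$ lifts to one in $\Jir{E'}$, a width count forces $\Jir{E'}$ to split as $n$ disjoint chains sitting above the chains of $P$, and the section is assembled chain by chain.

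For (\ref{thmdstc}) $\Rightarrow$ (\ref{thmdstd}), I argue the contrapositive. Suppose $D$ satisfies neither clause of (\ref{thmdstd}) and set $P:=\Jir D$. In the sub-case $\dim D < n$ with $P$ not an antichain, pick a cover $p \prec q$ of $P$ with $q$ maximal in $P$ (obtained by iterating covers upward from any starting cover), and let $Q$ be $P$ with this cover deleted (so $\leq_Q$ is the transitive closure of $\prec_P \setminus \set{(p,q)}$). A $Q$-antichain $A$ splits as $A_- \cup A_0 \cup A_+$, with $A_- \subseteq \ideal p$, $A_+ \subseteq \filter q = \set q$, and $A_0$ the rest; maximality of $q$ confines $|A_+|$ to at most one, and the observation that intra-$\ideal p$ comparisons are unaffected by the removal yields that $A_- \cup A_0$ is a $P$-antichain, so $|A| \leq \width P + 1 \leq n$. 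In the sub-case $\dim D = n$ with $P$ not a disjoint union of $n$ chains, fix any Dilworth partition of $P$ into $n$ chains; since $P$ is not the poset-theoretic disjoint union of these chains, some Hasse edge must cross chains of the partition, and deleting such a cross-chain cover preserves the partition inside $Q$, so $\width Q \leq n$. In either case the set-theoretic identity $Q \to P$ is an order-preserving surjection whose only candidate section is $\id P$, which fails to preserve the deleted pair $p \leq_P q$; dualizing, the embedding $D \hookrightarrow \mathcal O(Q) \in \Dnfin n$ admits no retraction.

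The main obstacle is the width control in the first sub-case: removing an arbitrary cover of $P$ can balloon the width by much more than one (for instance if both $\ideal p$ and $\filter q$ contain large antichains), and it is precisely the device of choosing $q$ maximal that pins $|A_+|$ to at most one and secures the tight bound $\width Q \leq \width P + 1$, using exactly the slack $\width P \leq n - 1$ provided by the hypothesis $\dim D < n$.
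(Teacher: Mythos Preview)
Your duality-based strategy is elegant and genuinely different from the paper's approach (which works with grids, length-preserving embeddings, and congruences rather than $\Jir$). The equivalence \eqref{thmdsta}$\Leftrightarrow$\eqref{thmdstb}$\Leftrightarrow$\eqref{thmdstc}, the reduction to $\{0,1\}$-extensions via the interval $E'$ and the projection $\pi$, the direction \eqref{thmdstd}$\Rightarrow$\eqref{thmdstc}, and the first sub-case of \eqref{thmdstc}$\Rightarrow$\eqref{thmdstd} are all correct.

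There is, however, a genuine gap in the second sub-case of \eqref{thmdstc}$\Rightarrow$\eqref{thmdstd}. Your claim that ``deleting such a cross-chain cover preserves the partition inside $Q$'' is false in general: a chain $C_k$ of the Dilworth partition may connect two of its consecutive elements \emph{only} through Hasse paths that pass through the deleted cross edge. Worse, the whole device of deleting a single Hasse edge can fail outright. Take $n=2$ and let $P=\{a,b,c,d,e\}$ with covers $a\prec c$, $b\prec c$, $c\prec d$, $c\prec e$ (an ``X'' through $c$). Then $\width P=2$, $P$ is neither an antichain nor a disjoint union of two chains, so you are squarely in your second sub-case. But deleting \emph{any} one of the four Hasse edges isolates one of $a,b,d,e$ and produces a $3$-element antichain in $Q$ (for instance, deleting $(a,c)$ yields the $Q$-antichain $\{a,d,e\}$). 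Thus $\width Q=3>n$ for every choice, and no $\mathcal O(Q)$ obtained this way lies in $\Dnfin 2$. The width bookkeeping that rescued the first sub-case (where you had one unit of slack) is unavailable here, and no choice of Dilworth partition or cross edge repairs it for this $P$.

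Your duality framework can still be made to work, but the second sub-case needs a different construction of $Q$: rather than deleting an edge, you can \emph{split} a suitable vertex. In the X-example, replace $c$ by incomparable $c_1,c_2$ with $a\prec c_1$, $b\prec c_2$, and $c_1,c_2\prec d,e$; then $\width Q=2$, the map collapsing $c_1,c_2\mapsto c$ is an order-preserving surjection onto $P$, and no section exists because $s(c)=c_1$ violates $b\leq_P c$ while $s(c)=c_2$ violates $a\leq_P c$. Formulating and proving the general vertex-splitting lemma (for an arbitrary $P$ of width $n$ that is not a disjoint union of $n$ chains) is the missing ingredient. The paper sidesteps this entirely: it embeds $D$ cover-preservingly into an $n$-dimensional grid $K$ (Lemma~\ref{lemma:nLskPcjktNLTn}) and then uses Lemma~\ref{lemmaKzTfPsz}\eqref{lemmaKzTfPszc} to show that any retraction $K\to D$ would force $|K|=|D|$, which is impossible when $D$ is not itself a grid.
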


We are going to prove this Theorem in Section~\ref{sect:dist}.
Since  $\Dnfin \omega$  is \emph{the class of all finite distributive lattices}, $\Dnfin \omega=\bigcup_{n\in\Nplu} \Dnfin n$, and the direct product of $\omega$ many nontrivial chains cannot be finite, Theorem \ref{thmdst} clearly implies the following corollary.

\begin{corollary}\label{cor:D-omega} Let $D$ be a \emph{finite}  distributive lattice.  
Then the following four conditions are equivalent.
\begin{enumerate}
\item\label{cor:D-omegaa} $D$ is algebraically closed in the class $\Dnfin \omega$ of finite distributive lattices with all homomorphisms.
\item\label{cor:D-omegab} $D$ is strongly algebraically closed in $\Dnfin \omega$.
\item\label{cor:D-omegac} $D$ is an absolute retract for  $\Dnfin \omega$.
\item\label{cor:D-omegad} $D$ is a boolean lattice. 
\end{enumerate} 
\end{corollary}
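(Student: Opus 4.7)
The plan is to obtain Corollary \ref{cor:D-omega} as an immediate specialization of Theorem \ref{thmdst} at $n=\omega$, together with a one-line cardinality observation. No new machinery beyond the theorem will be needed.

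First, I would recall the remark made in the paragraph just after Theorem \ref{thmdst}: every finite distributive lattice has some finite order dimension, so
\[
\Dnfin\omega \;=\; \bigcup_{n\in\Nplu}\Dnfin n
\]
is precisely the class of all finite distributive lattices equipped with all homomorphisms. In particular, the hypothesis ``$D$ is a finite distributive lattice'' of the corollary coincides with the hypothesis ``$D\in\Dnfin\omega$'' of Theorem \ref{thmdst} taken at $n=\omega$, and conditions (\ref{cor:D-omegaa})--(\ref{cor:D-omegac}) of the corollary are literally conditions (\ref{thmdsta})--(\ref{thmdstc}) of the theorem in that case. Hence applying the theorem at $n=\omega$ already gives the mutual equivalence of (\ref{cor:D-omegaa}), (\ref{cor:D-omegab}), (\ref{cor:D-omegac}), and shows that each of them is equivalent to the statement ``$D$ is a boolean lattice, or $D$ is the direct product of $\omega$ many nontrivial finite chains.''

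It then only remains to see that the second alternative in this disjunction is vacuous for finite $D$: a direct product of $\omega$ many chains of size at least $2$ has cardinality at least $2^{\omega}$ and is therefore infinite, so it cannot coincide with the finite lattice $D$. Consequently the fourth condition collapses to ``$D$ is a boolean lattice'', which is precisely (\ref{cor:D-omegad}).

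The main ``obstacle'' is essentially invisible here: the substantive work is already absorbed into Theorem \ref{thmdst}, and the passage from the theorem to the corollary requires only the trivial cardinality remark ruling out an infinite product. In particular, no additional argument about retractions, order dimension, or the structure of $\Jir D$ is needed at this stage.
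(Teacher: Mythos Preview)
Your proposal is correct and matches the paper's own reasoning: the paper does not give a separate formal proof but simply notes, in the paragraph following Theorem~\ref{thmdst}, that $\Dnfin\omega=\bigcup_{n\in\Nplu}\Dnfin n$ is the class of all finite distributive lattices and that a direct product of $\omega$ many nontrivial chains cannot be finite, so the corollary follows immediately from the $n=\omega$ case of the theorem.
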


The proofs of the following three corollaries are given in Section~\ref{sect:dist}; note that two of them follow partly from the proof of Theorem~\ref{thmdst} rather than from the theorem itself.

\begin{corollary}\label{cor:DfnVGs} For a \emph{finite}  distributive lattice $D$, the following four conditions are equivalent.
\begin{enumerate}
\item\label{cor:DfnVGsa} $D$ is algebraically closed in the class $\Dall$ of all (not necessarily finite) distributive lattices with all homomorphisms.
\item\label{cor:DfnVGsb} $D$ is strongly algebraically closed in $\Dall$.
\item\label{cor:DfnVGsc} $D$ is an absolute retract for  $\Dall$.
\item\label{cor:DfnVGsd} $D$ is a boolean lattice.
\end{enumerate} 
\end{corollary}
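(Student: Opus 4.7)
The plan is to combine Proposition~\ref{prop}, Corollary~\ref{cor:D-omega}, and one explicit construction that handles infinite extensions. Since $\Dall$ is a class of algebras with all homomorphisms, Proposition~\ref{prop} combined with~\eqref{pbx:rmDhwhspKlQB} yields (\ref{cor:DfnVGsb})$\,\Leftrightarrow\,$(\ref{cor:DfnVGsc}) at once. The implication (\ref{cor:DfnVGsb})$\,\Rightarrow\,$(\ref{cor:DfnVGsa}) is trivial. For the converse, I would use a compactness argument that exploits the finiteness of $D$: given a system $\Sigma$ of equations with unknowns indexed by some set $J$ that is solvable in an extension $B\in\Dall$, each finite subsystem is solvable in $D$ by~(\ref{cor:DfnVGsa}); viewing the corresponding solution sets as closed subsets of the compact product space $D^J$ (with $D$ discrete), the finite intersection property yields a point in their total intersection, i.e., a solution of $\Sigma$ in $D$.

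For (\ref{cor:DfnVGsc})$\,\Rightarrow\,$(\ref{cor:DfnVGsd}), every $\Dnfin\omega$-extension of $D$ is also a $\Dall$-extension, and between finite distributive lattices every lattice homomorphism is a morphism in both classes; hence the retraction guaranteed by~(\ref{cor:DfnVGsc}) shows that $D$ is an absolute retract for $\Dnfin\omega$, and Corollary~\ref{cor:D-omega} forces $D$ to be boolean.

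The substantial step is (\ref{cor:DfnVGsd})$\,\Rightarrow\,$(\ref{cor:DfnVGsc}): a finite boolean lattice $D=2^n$ with atoms $a_1,\dots,a_n$ is an absolute retract for $\Dall$. For any extension $B\in\Dall$ of $D$, my plan is to invoke the Prime Ideal Theorem for distributive lattices to extend, for each $i$, the principal prime filter $\filter a_i$ of $D$ to a prime filter $F_i$ of $B$ with $F_i\cap D=\filter a_i$. Such $F_i$ exist because the filter of $B$ generated by $\filter a_i$ is disjoint from the ideal of $B$ generated by $\ideal a_i'$, where $a_i'$ denotes the boolean complement of $a_i$ in $D$; otherwise $a_i\leq a_i'$ would hold in $B$, forcing $a_i=a_i\wedge a_i'=0_D$ and contradicting that $a_i$ is an atom. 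I would then define $f\colon B\to D$ by $f(b)=\bigvee\{a_i : b\in F_i\}$. Primality of each $F_i$ yields preservation of joins, the filter property together with the pairwise meet-disjointness of distinct atoms in $D$ yields preservation of meets, and the identity $F_i\cap D=\filter a_i$ ensures that $f$ is the identity on $D$. The main obstacle is precisely this construction: it exploits both that $D$ is boolean (so that the $\filter a_i$ are prime and the atoms have complements in $D$) and the Prime Ideal Theorem for separating filters from ideals in an arbitrary, possibly infinite, $B$.
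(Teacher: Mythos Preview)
Your proof is correct and broadly parallel to the paper's, with two differences worth noting.

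For \eqref{cor:DfnVGsa}\,$\Rightarrow$\,\eqref{cor:DfnVGsb}, the paper simply invokes Proposition~\ref{prop} (see~\eqref{pbx:sChsWsZjMSvZlf}), but strictly speaking that proposition yields \eqref{cor:DfnVGsa}\,$\Leftrightarrow$\,\eqref{cor:DfnVGsb} only when the ambient class consists of \emph{finite} algebras, which $\Dall$ does not. Your Tychonoff compactness argument closes this gap cleanly and is a genuine addition over the paper's presentation. (An alternative route, also not spelled out in the paper, is to observe that algebraically closed in $\Dall$ implies algebraically closed in $\Dnfin\omega$, whence Corollary~\ref{cor:D-omega} gives \eqref{cor:DfnVGsd} directly.)

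For \eqref{cor:DfnVGsd}\,$\Rightarrow$\,\eqref{cor:DfnVGsc}, both you and the paper use the Prime Ideal Theorem, but the packaging differs. The paper picks a maximal chain $c_0\prec\cdots\prec c_n$ in $D$, separates each covering pair by a prime ideal of the extension, intersects the resulting two-block congruences to obtain a congruence $\Theta$ with $\restrict\Theta D=\diag D$ and exactly $|D|$ blocks, and reads off the retraction from $\Theta$ via~\eqref{eq:bRhKlkPzLcsb}. You instead extend each atomic prime filter $\filter a_i$ of $D$ to a prime filter $F_i$ of $B$ and write the retraction explicitly as $f(b)=\bigvee\{a_i:b\in F_i\}$. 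These are dual formulations of the same idea; yours is slightly more direct, while the paper's congruence viewpoint is packaged so as to be reusable (via~\eqref{bpx:mzRdfZTrskgGN}) in the grid case~\eqref{pbx:whWbznTnkc}. Your \eqref{cor:DfnVGsc}\,$\Rightarrow$\,\eqref{cor:DfnVGsd} via Corollary~\ref{cor:D-omega} is likewise a legitimate shortcut; the paper instead reduces to~\eqref{pbx:whWbznTnka} for $\Dncovzo{\dim D+1}$.
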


Note that while Schmid~\cite{schmid} only allows  lattice embeddings and homomorphisms that preserve 0 and 1 whenever they exist, there is no such restriction in the present paper. Therefore, even the \eqref{cor:DfnVGsd} $\Rightarrow$ \eqref{cor:DfnVGsa} implication in Corollary~\ref{cor:DfnVGs} 
is stronger than what Schmid~\cite{schmid} proves for a finite boolean lattice $D$. The classes $\Dnfin n$ for $n\in\Nplu$ have not occurred in this context previously. Let us emphasize that Corollary~\ref{cor:DfnVGs} does not describe the absolute retracts for $\Dall$; it describes only the finite absolute retracts for this class.

For finite lattices $A$ and $B$, a lattice homomorphism $f\colon A\to B$ is said to be a \emph{\covzo} if $f(0)=0$,  
$f(1)=1$, and for all $x,y\in A$ such that $x\prec y$, we have that $f(x)\prec f(y)$. 
Since any two maximal chains in a finite semimodular lattice are of the same length (this is the so-called \emph{Jordan--H\"older chain condition}), we easily obtain the following observation; see Lemma~\ref{lemmaKzTfPsz} for a bit more information. 
\begin{equation}
\parbox{7.8cm}{if $A$ and $B$ are finite semimodular lattices and there exists a \covzo{} $A\to B$, then $A$ and $B$ are of the same length.}
\label{pbx:lnPrsHm} 
\end{equation}
Note that distributive lattices, to which we will apply \eqref{pbx:lnPrsHm}, are semimodular.

For  $n\in\Nplu\cup\set\omega$, let $\Dncovzo n$ denote the category consisting of finite distribute lattices of order dimension at most $n$ as objects and \covzo{s} as morphisms. 
(So $\Dncovzo n$ has the same objects as $\Dnfin n$, but it has much less morphisms.)

\begin{corollary}\label{cor-kvsnLptv}
Let $n\in\Nplu\cup\set{\omega}$, and let 
$D\in\Dncovzo n$. Then the following five conditions are equivalent.
\begin{enumerate}
\item\label{cor-kvsnLptva} $D$ is algebraically closed in $\Dncovzo n$.
\item\label{cor-kvsnLptvb} $D$ is strongly algebraically closed in $\Dncovzo n$.
\item\label{cor-kvsnLptvc} $D$ is an absolute $\Hop$-retract for  $\Dncovzo n$.
\item\label{cor-kvsnLptvd} $D$ is an absolute retract for  $\Dncovzo n$.
\item\label{cor-kvsnLptve} $D$ is a boolean lattice or $D$ is the direct product of $n$ nontrivial finite chains.  
\end{enumerate} 
\end{corollary}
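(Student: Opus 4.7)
The plan is to close the cycle (e) $\Rightarrow$ (d) $\Rightarrow$ (c) $\Leftrightarrow$ (b) $\Leftrightarrow$ (a) $\Rightarrow$ (e). The equivalences (a) $\Leftrightarrow$ (b) $\Leftrightarrow$ (c) are immediate from Proposition~\ref{prop} applied to the category $\Dncovzo n$, since its objects are finite algebras. The implication (d) $\Rightarrow$ (c) is trivial: every morphism of $\Dncovzo n$ is in particular a lattice homomorphism, so an absolute retract for $\Dncovzo n$ is automatically an absolute $\Hop$-retract for $\Dncovzo n$.

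For (e) $\Rightarrow$ (d), I would establish the stronger claim that if $D$ satisfies (e), then every $\Dncovzo n$-extension $B$ of $D$ coincides with $D$, so the identity map is the required cover-preserving retraction. Indeed, \eqref{pbx:lnPrsHm} applied to the cover-preserving $\{0,1\}$-embedding $D\hookrightarrow B$ gives $\length D=\length B$, and since the length of a finite distributive lattice equals the cardinality of its set of join-irreducibles, $|\Jir B|=|\Jir D|$. Birkhoff's duality turns the embedding into an order-preserving surjection $g\colon\Jir B\to\Jir D$, which is then a bijection by counting. In the boolean case, $\Jir D$ is an antichain, so $|D|=2^{|\Jir D|}$ is the maximum possible number of order ideals on a $|\Jir B|$-element poset, and the inclusion $D\subseteq B$ forces $\Jir B$ to be an antichain as well and $B=D$. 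In the direct-product-of-$n$-chains case, $\Jir D$ is a disjoint union of $n$ chains; because $g$ is order-preserving, preimages of different components are mutually incomparable in $\Jir B$, and since $\dim B\le n$ bounds $\width{\Jir B}$ by $n$, each of the $n$ preimage blocks must itself be a chain, whence $\Jir B\cong\Jir D$ and $B=D$.

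The main obstacle is (c) $\Rightarrow$ (e). The plan is to reduce it to Theorem~\ref{thmdst}: I would show that an absolute $\Hop$-retract for $\Dncovzo n$ is automatically an absolute retract for $\Dnfin n$, after which (e) follows from the characterization in Theorem~\ref{thmdst}. The delicate step is that given an arbitrary $\Dnfin n$-extension $B_0$ of $D$, one must build a $\Dncovzo n$-extension $B$ containing $B_0$ as a sublattice and with $D\hookrightarrow B$ cover-preserving and $\{0,1\}$-preserving, so that the homomorphism retraction granted by (c) restricts nontrivially to $B_0$. A natural approach is an amalgam of $B_0$ with an auxiliary distributive lattice extracted from a chain decomposition of $\Jir D$ (via Dilworth's theorem), arranged so that $\dim B\le n$ and the cover-preservation of $D\hookrightarrow B$ hold simultaneously; the combinatorial control over both constraints is what I expect to be the hardest part. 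The contrapositive strategy---constructing, for each $D$ failing (e), an explicit $\Dncovzo n$-extension admitting no homomorphism retraction---is illustrated by the witness $C_3\hookrightarrow 2^2$ in $\Dncovzo 2$, where the meet--join identities in $2^2$ force any putative retraction to send the extra atom simultaneously to $0$ and to $1$.
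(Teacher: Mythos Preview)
Your handling of (a)$\Leftrightarrow$(b)$\Leftrightarrow$(c) and (d)$\Rightarrow$(c) matches the paper's. Your argument for (e)$\Rightarrow$(d) is correct and takes a different route: you show directly via Birkhoff duality that a lattice satisfying (e) admits no proper $\Dncovzo n$-extension at all, whereas the paper first invokes the absolute-retract results \eqref{pbx:whWbznTnkb}--\eqref{pbx:whWbznTnkc} built for Theorem~\ref{thmdst} to obtain a retraction from any $\Dncovzo n$-extension and then uses Lemma~\ref{lemmaKzTfPsz}\eqref{lemmaKzTfPszc} to conclude that this retraction is an isomorphism. Both arguments land at the same conclusion (no proper extension), but yours is self-contained while the paper's recycles machinery already in place.

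The genuine gap is in (c)$\Rightarrow$(e). Your proposed reduction to Theorem~\ref{thmdst}---showing that an absolute $\Hop$-retract for $\Dncovzo n$ is automatically an absolute retract for $\Dnfin n$---cannot work. The amalgam $B\supseteq B_0$ with $D\hookrightarrow B$ a $\Dncovzo n$-morphism would force $\length B=\length D$ by \eqref{pbx:lnPrsHm}, yet $B\supseteq B_0$ forces $\length B\geq\length{B_0}$, and a $\Dnfin n$-extension $B_0$ can have length far exceeding $\length D$. For instance, $D=2^2$ (which \emph{does} satisfy (c)) embeds into $B_0=C_3\times C_3\in\Dnfin 2$, but no $B\supseteq B_0$ can have length $2$. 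The trivial implication actually runs the other way: an absolute retract for $\Dnfin n$ is an absolute $\Hop$-retract for $\Dncovzo n$ (fewer extensions to handle), not conversely.

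The paper proves (c)$\Rightarrow$(e) by exactly the contrapositive strategy you sketch at the end but do not carry out: for each $D$ failing (e) it constructs a proper $\Dncovzo n$-extension $K$. This is \eqref{pbx:whWbznTnka}, and its proof splits into cases via Lemma~\ref{lemma:nLskPcjktNLTn} (a \embcovzo{} of $D$ into a grid of dimension $\dim D$) and Lemma~\ref{lemma:notbOOle} (a length-preserving embedding of a non-boolean $k$-grid into a $(k{+}1)$-grid). Once $K\neq D$ is in hand, any retraction $K\to D$ is an isomorphism by Lemma~\ref{lemmaKzTfPsz}\eqref{lemmaKzTfPszc}, a contradiction. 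Your $C_3\hookrightarrow 2^2$ witness is precisely the $\dim D=1<n=2$ instance of this construction.
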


Corollary~\ref{cor-kvsnLptv} shows that we can disregards many morphisms from the categories  occurring in Theorem~\ref{thmdst} so that absolute retracts remain the same.  This is not at all so for the category $\alg S$ occurring in Theorem~\ref{thmsps}; see Cz\'edli~\cite{czgpatchabsrectr} for details.
 
In the following corollary, ``nontrivial"  means  ``non-singleton"; let us repeat that  planar lattices are finite by definition. 

\begin{corollary}\label{cor:krSzwzlvnlZvl}
If $D$ is a planar distributive lattice, then the following five conditions are equivalent.
\begin{enumerate}
\item\label{cor:krSzwzlvnlZvla} $D$ is an absolute retract for the class of planar distributive lattices with all homomorphisms.
\item\label{cor:krSzwzlvnlZvlb}  $D$ is an absolute $\Hop$-retract for the category  of planar distributive lattices with \covzo{}s as morphisms.
\item\label{cor:krSzwzlvnlZvlc}  $D$ is an absolute retract for the category  of planar distributive lattices with \covzo{}s as morphisms.
\item\label{cor:krSzwzlvnlZvld} $|D|\leq 2$ or $D$ is the direct product of two nontrivial finite chains.
\end{enumerate} 
\end{corollary}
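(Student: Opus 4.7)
The plan is to reduce this corollary to the $n=2$ instances of Theorem~\ref{thmdst} and Corollary~\ref{cor-kvsnLptv} via the standard identification of planar finite distributive lattices with those of order dimension at most~$2$. The essential preliminary fact I would use is the equivalence
\[
\text{$D$ is a planar distributive lattice} \iff D\in\Dnfin 2,
\]
which is obtained by combining \eqref{eqtxt:dimDwJD} with Dilworth's theorem: $\dim D\leq 2$ iff $\width{\Jir D}\leq 2$ iff $\Jir D$ is a union of two chains, and the last condition is well known to be equivalent to the planarity of the Hasse diagram of $D$ (a planar diagram can be obtained from an order embedding of $D$ into the product of two chains, and conversely any planar diagram of $D$ forces $\Jir D$ to split into the left and right ``boundary'' chains). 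I would record this as a brief preliminary remark, since it is the only point genuinely outside the scope of the results already proved.

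With this identification, the class of planar distributive lattices with all homomorphisms is exactly $\Dnfin 2$, and the category of planar distributive lattices with \covzo{}s as morphisms is exactly $\Dncovzo 2$. The equivalence (\ref{cor:krSzwzlvnlZvla})$\Leftrightarrow$(\ref{cor:krSzwzlvnlZvld}) is then simply the $n=2$ instance of Theorem~\ref{thmdst}, after observing that the finite boolean lattices of order dimension at most $2$ are exactly the singleton, the two-element chain, and the four-element boolean lattice $2\times 2$, and that $2\times 2$ is itself a direct product of two nontrivial chains. Hence the clause ``$D$ is a boolean lattice or $D$ is the direct product of $n$ nontrivial finite chains'' from Theorem~\ref{thmdst} collapses, for $n=2$, to ``$|D|\leq 2$ or $D$ is the direct product of two nontrivial finite chains,'' which is precisely clause~(\ref{cor:krSzwzlvnlZvld}).

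Similarly, the equivalences (\ref{cor:krSzwzlvnlZvlb})$\Leftrightarrow$(\ref{cor:krSzwzlvnlZvlc})$\Leftrightarrow$(\ref{cor:krSzwzlvnlZvld}) will follow directly from the $n=2$ instance of Corollary~\ref{cor-kvsnLptv}, with exactly the same cosmetic simplification of the boolean clause. All four conditions are thereby chained together through~(\ref{cor:krSzwzlvnlZvld}).

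The only real obstacle I foresee is producing a clean justification of the ``planar iff $2$-dimensional'' equivalence for finite distributive lattices; beyond that, the argument is merely a matter of reading off the $n=2$ consequences of the two already-established results and normalising the statement of condition (d). No new lattice-theoretic content is required, and in particular neither the slim semimodular machinery of Section~\ref{sect-thm} nor a fresh retraction construction is needed here.
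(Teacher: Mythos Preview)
Your proposal is correct and matches the paper's approach essentially line for line: identify the class of planar distributive lattices with $\Dnfin 2$, then read off the $n=2$ cases of Theorem~\ref{thmdst} and Corollary~\ref{cor-kvsnLptv}, with the cosmetic observation that the boolean clause collapses into ``$|D|\leq 2$'' plus the grid clause. The only difference is in handling the preliminary equivalence: the paper simply invokes Proposition~5.2 of Kelly and Rival~\cite{kellyrival} (a finite lattice is planar iff its order dimension is at most~$2$), whereas you sketch a route through \eqref{eqtxt:dimDwJD} and the slimness condition on $\Jir D$; citing Kelly--Rival is cleaner and avoids the ``only real obstacle'' you mention.
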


Of course, Proposition~\ref{prop} is applicable for both  classes mentioned in parts \eqref{cor:krSzwzlvnlZvla} and \eqref{cor:krSzwzlvnlZvlb} of Corollary~\ref{cor:krSzwzlvnlZvl}, and so we could add $2\cdot 2= 4$ additional equivalent conditions to this corollary.

\color{black}

\section{Proving our proposition}\label{sect:retr}
To ease the notation, we give the proof only for lattices; the general proof would be practically the same.

\begin{proof}[Proof of Proposition~\ref{prop}] First, we deal with the implication \eqref{propa} $\Rightarrow$ \eqref{propb} and, if $\alg X$ consists of finite lattices, also with the implication \eqref{propc} $\Rightarrow$ \eqref{propb}.

Assume that $\alg X$ is a class of lattices, $A\in \alg X$, and either $A$ is strongly algebraically closed in $\alg X$ or 
$\alg X$ consists of finite lattices and $A$ is algebraically closed in $\alg X$.
Let $B\in \alg X$ be an $\alg X$-extension of  $A$. We need to show the existence of a retraction $f\colon B\to A$. We can assume that $A$ is a proper sublattice of $B$, because the identity map of $B$ 
would obviously be a $B\to A$ retraction if $A=B$. The elements of $A$ and those of $B\setminus A$ will be called \emph{old elements} and \emph{new} elements, respectively.
For each new element $b$, we take an unknown $x_b$. 
For each pair $(a,b)\in B\times B$ of elements such that at least one of $a$ and $b$ is  new, we define an equation 
$\Ejoin a b$ according to the following six rules.
\allowdisplaybreaks{
\begin{align}
\text{If $a$ is old, $b$ is new, and $a\vee b$ is old, then 
$\Ejoin a b$ is $a\vee x_b \approx a\vee b$.}\label{joinono}
\\
\text{If $a$ is new, $b$ is old, and $a\vee b$ is old, then 
$\Ejoin a b$ is $x_a\vee b \approx a\vee b$.}\label{joinnoo}
\\
\text{If $a$ and $b$ are new and $a\vee b$ is old, then 
$\Ejoin a b$ is $x_a\vee x_b \approx a\vee b$.}\label{joinnno}
\\
\text{If $a$ is old, $b$ and $a\vee b$ are new, then 
$\Ejoin a b$ is $a\vee x_b \approx x_{a\vee b}$.}\label{joinonn}
\\
\text{If $a$ and $a\vee b$ are new and $b$ is old, then 
$\Ejoin a b$ is $x_a\vee b \approx x_{a\vee b}$.}\label{joinnon}
\\
\text{If $a$, $b$, and $a\vee b$ are all new, then 
$\Ejoin a b$ is $x_a\vee x_b \approx x_{a\vee b}$.}\label{joinnnn}
\end{align}
}%
Analogously,  replacing $\vee$ by $\wedge$, we define
the equations $\Emeet a b$ for all $(a,b)\in B\times B$ such that at least one of $a$ and $b$ is a new element.
Let $\widehat E$ be the system of all equations we have defined so far. Note that if $\alg X$ consists of finite lattices, then $\widehat E$ is finite.

Clearly, $\widehat E$ has a solution in $B$. Indeed, we can let $x_b:=b$ for all new elements $b$ to obtain a solution of $\widehat E$. 
Since we have assumed that either $A$ is strongly algebraically closed in $\alg X$ or $\alg X$ consists of finite lattices and 
$A$ is algebraically closed in $\alg X$, it follows that $\widehat E$ also has a solution in $A$. This allows us to fix a solution of $\widehat E$ in $A$. That is, we can choose an element $u_b\in A$ for each new element $b$ such that  
the equations \eqref{joinono}--\eqref{joinnnn} turn into true equalities when the unknowns $x_b$, for $b\in B\setminus A$, are replaced by the elements $u_b$.

Next, consider the map
\begin{equation*}
f\colon B\to A,\text{ defined by }c\mapsto
\begin{cases}
c,&\text{if $c$ is an old element,}\cr
u_c,&\text{if $c$ is a new element.}
\end{cases}
\end{equation*}
We claim that $f$ is a retraction. Clearly, $f$ acts identically on $A$. So we need only to show that $f$ is a homomorphism. It suffices to verify that $f$ commutes with joins since the case of meets is analogous. If $a,b\in A$, then $a\vee b$ is also in $A$, and we have that $f(a)\vee f(b)=a\vee b=f(a\vee b)$, as required. If, say,
$a, a\vee b\in A$ and $b\in B\setminus A$, then \eqref{joinono} applies and we obtain that  $f(a)\vee f(b)= a\vee u_b= a \vee b=f(a\vee b)$, as required. If $a,b,a\vee b$ are all new, then we can use \eqref{joinnnn} to obtain that  $f(a)\vee f(b)=u_a\vee u_b=u_{a\vee b}=f(a\vee b)$, as required. The rest of the cases follow similarly from 
\eqref{joinnoo}--\eqref{joinnon}. Thus, we conclude that $f$ commutes with joins. We obtain analogously that it commutes with meets, whereby $f$ is a homomorphism. So $f$ is a retraction, proving that  \eqref{propa} $\Rightarrow$ \eqref{propb}
and, if $\alg X$ consists of finite lattices,  \eqref{propc} $\Rightarrow$ \eqref{propb}.

To prove the  implication, \eqref{propb} $\Rightarrow$ \eqref{propa}, assume that  $A\in \alg X$ is an absolute $\Hop$-retract for $\alg X$,  $B\in \alg X$ is an $\alg X$-extension of $A$, and 
a system $\widehat G$ of equations with constants taken from $A$ has a solution in $B$.

Let $x,y,z,\dots $ denote the unknowns occurring in $\widehat G$ (possibly, infinitely many), and let $b_x, b_y, b_z,\dots \in B$ form a solution of $\widehat G$. Since we have assumed that $A$ is an absolute $\Hop$-retract for $\alg X$, we can take a retraction $f\colon B\to A$. We define $d_x:=f(b_x)$, $d_y:=f(b_y)$, $d_z:=f(b_z)$, \dots; they are elements of $A$.
 Let $p(a_1,\dots,a_k, x,y,z, ...)=q(a_1,\dots,a_k, x,y,z, ...)$ be one of the equations of $\widehat G$; here $p$ and $q$ are lattice terms, the constants   $a_1,\dots, a_k$ are in $A$, and only finitely many unknowns occur in this equation, of course. Using that $f$ commutes with lattice terms and,  at $=^\ast$, using also that $b_x$, $b_y$, $b_z$, \dots form a solution of the equation in question, we obtain that 
\begin{align*}
p(a_1,\dots,a_k, d_x, d_y,d_z,\dots)
= p(f(a_1),\dots,f(a_k), f(b_x), f(b_y),f(b_z,)\dots)\cr
= f(p(a_1,\dots,a_k, b_x, b_y,b_z,\dots)) =^\ast  f(q(a_1,\dots,a_k, b_x, b_y,b_z,\dots))=\cr
q(f(a_1),\dots,f(a_k), f(b_x), f(b_y),f(b_z),\dots)=q(a_1,\dots,a_k, d_x, d_y,d_z,\dots).
\end{align*}
This shows that $d_x,d_y,d_z,\dots \in A$ form a solution of $\widehat G$ in $A$. Therefore, $A$ is strongly algebraically closed in $\alg X$, showing the validity of  \eqref{propb} $\Rightarrow$ \eqref{propa}.

Finally, the implication  \eqref{propa} $\Rightarrow$ \eqref{propc} is trivial,  completing the proof of Proposition~\ref{prop}.
\end{proof}

\section{{Proving} Theorem \ref{thmsps}}\label{sect-thm}
First, we recall briefly from Cz\'edli and Schmidt \cite{czgschtvisual} what we need to know about slim semimodular lattices. {Let us repeat that slim lattices are finite by definition; every lattice in this section is assumed to be \emph{finite}}.
For a slim semimodular lattice $L$, we always assume that a planar diagram of $L$ is fixed. 
A  cover-preserving four-element boolean sublattice of $L$ is called a \emph{$4$-cell}.
For $m,n\in\Nplu$, the direct product of an $(m+1)$-element chain and an $(n+1)$-element chain is called a \emph{grid} or, when we want to be more precise, an \emph{$m$-by-$n$ grid}; note that this grid has exactly $mn$ 4-cells.

We can add a \emph{fork} to a 4-cell of a slim semimodular lattice as it is shown in Figure 5 of \cite{czgschtvisual}; this is also shown here in Figure~\ref{figabsretr1}, where we have added a fork to the light-grey  4-cell of $S_7^{(1)}$ to obtain $S_7^{(2)}$, and in Figure~\ref{figllstr}, where we can obtain $R$ from the grid $G$ by adding a fork to the upper 4-cell of $G$.   \emph{Corners} are particular doubly irreducible elements on the boundary of $L$, see Figure 2 in \cite{czgschtvisual}, but we do not need their definition here. Instead of the exact definition of slim rectangular lattices, it suffices to know their characterization, which is given by (the last sentence of) Theorem 11 and  Lemma 22 in \cite{czgschtvisual} as follows:
\begin{equation}
\parbox{9cm}{$L$ is a
\emph{slim rectangular} lattice if and only if it can be obtained from a  grid by adding forks, one by one, in a finite (possibly zero) number of steps.}
\label{pbx:sRctGlr}
\end{equation}
We know from Lemma 21 of \cite{czgschtvisual} that 
\begin{equation}
\parbox{9cm}{a lattice $L$ is a slim semimodular lattice if and only if $|L|\leq 2$ or $L$ can be obtained  from a slim rectangular lattice by removing finitely many corners, one by one.}
\label{pbx:sLcRGns}
\end{equation}

\begin{figure}[ht]
\centerline
{\includegraphics[width=\textwidth]{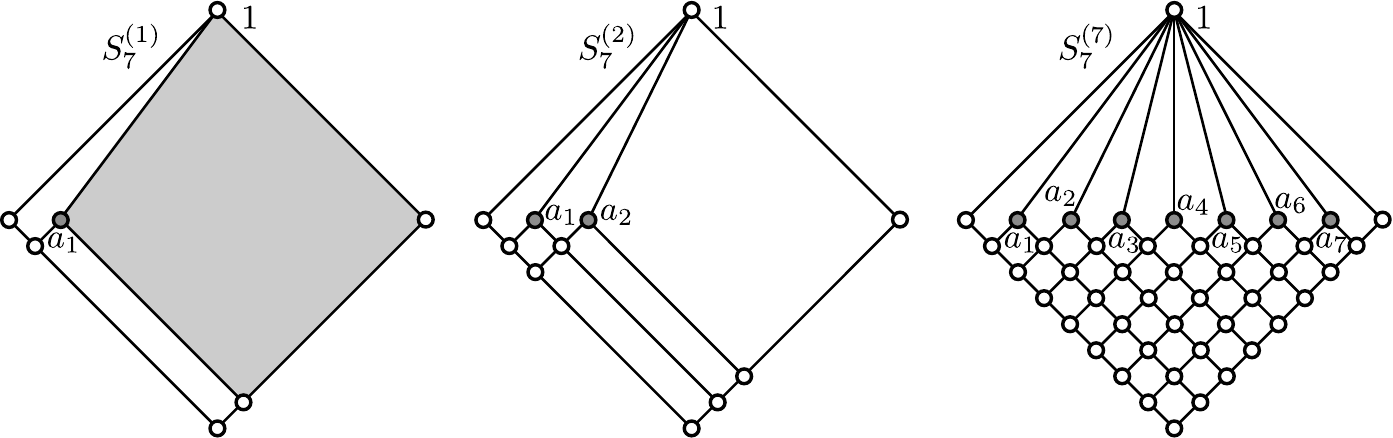}}      
\caption{$S_7^{(1)}$, $S_7^{(2)}$, and $S_7^{(7)}$   }\label{figabsretr1}
\end{figure}

\begin{proof}[Proof of Theorem \ref{thmsps}]
Since slim semimodular lattices are finite by definition, the equivalence of  \eqref{thmnul} and  \eqref{thma} follows trivially from Proposition~\ref{prop}. Also, Proposition~\ref{prop} yields the equivalence of \eqref{thma} and \eqref{thmb}.  Since the one-element lattice is an absolute retract for any class of lattices containing it, the implication \eqref{thmd} $\Rightarrow$ \eqref{thmb}  is trivial.

Thus, it suffices to prove the implication  \eqref{thmb} $\Rightarrow$ \eqref{thmd}.  To do so, it is sufficient to prove that whenever $L\in\alg S$ and $|L|\geq 2$, then $L$ is not an absolute retract for $\alg S$.  
So let $L$ be a slim semimodular lattice with at least two elements. By \eqref{pbx:sLcRGns} (or trivially if $|L|=2$), we can pick a slim rectangular lattice $R$ such that $L$ is a sublattice of $R$. It follows from \eqref{pbx:sRctGlr} that there exist $m,n\in\Nplu$ such that $R$ can be obtained from an $m$-by-$n$ grid $G$ by adding forks, one by one. Let $t\in\Nplu$ denote the smallest number such that $m+n+1\leq t$ and $|L|<t$. 

To present an example that helps the reader follow the proof, let $L$ be the 9-element slim semimodular lattice on the top left of  Figure \ref{figllstr}. For this $L$, we define  $R$ and $G$ by the top right diagram and the bottom right diagram of Figure \ref{figllstr}, respectively, and we have that $m=2$, $n=1$, and $t=10$.

\begin{figure}[ht]
\centerline
{\includegraphics[width=\textwidth]{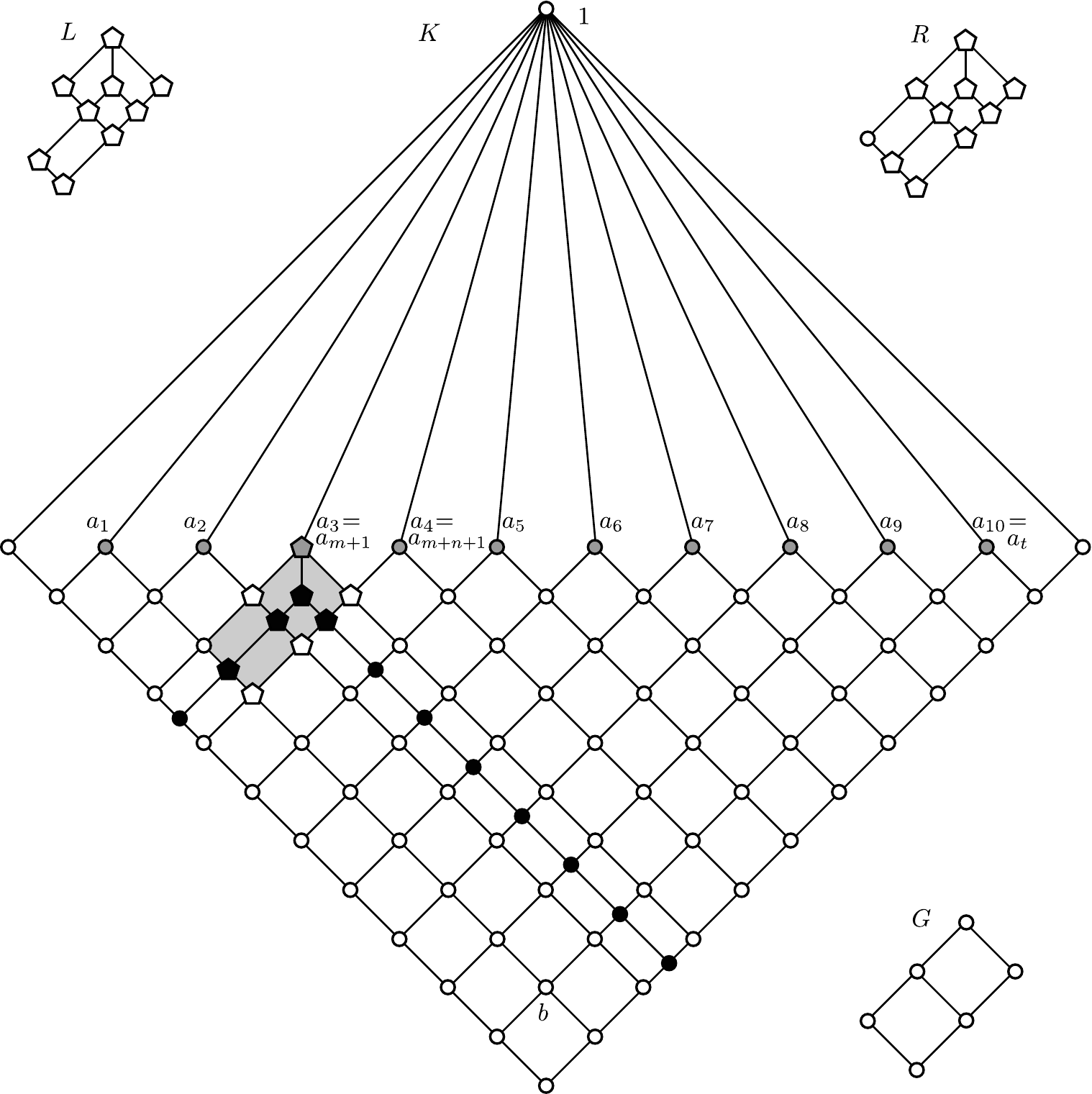}} 
\caption{Illustrating the proof of Theorem~\ref{thmsps}}\label{figllstr}
\end{figure}

We define the lattices $S_7^{(i)}$ for  $i\in\Nplu$ by induction as follows; see Figure~\ref{figabsretr1} for $i\in\set{1,2,7}$, and see the diagram in the middle of Figure~\ref{figllstr} for $i=10$ if we disregard the black-filled elements. (That is, $S_7^{(i)}=K\setminus\{$black-filled elements$\}$ in this diagram.)
Resuming the definition of the lattices $S_7^{(i)}$, we obtain $S_7^{(1)}$ by adding a fork to the only 4-cell of the four-element boolean lattice. From  $S_7^{(i)}$, we obtain $S_7^{(i+1)}$ by adding a fork to the rightmost 4-cell of $S_7^{(i)}$ that contains $1$, the largest element of $S_7^{(i)}$. 
(Note that we have also defined a fixed planar diagram of $S_7^{(i)}$  in this way.) The elements of $S_7^{(i)}$ (or those of a planar lattice diagram) not on the boundary of the diagram are called \emph{inner elements}. Let $a_1,a_2, \dots, a_i$ be the inner coatoms of $S_7^{(i)}$, listed from left to right. In our diagrams, they are grey-filled.
From now on, we only need  $S_7^{(t)}$. It follows from \eqref{pbx:sRctGlr} that $S_7^{(t)}$ is a slim semimodular (in fact, a slim rectangular) lattice. The meet $a_1\wedge\dots \wedge a_t$ of its inner coatoms will be denoted by $b$, as it is indicated in Figure~\ref{figllstr}.

Since $m+n+1\leq t$, the interval $[b,a_{m+1}]$ of $S_7^{(t)}$ includes an $m$-by-$n$ grid $G'$ with top element $a_{m+1}$. In our example, $G'$ is indicated by the light-grey area in the sense that $G'$ consists of those six elements of $S_7^{(t)}=S_7^{(10)}$ that are on the boundary of the light-grey rectangle. (Remember that $S_7^{(10)}= K\setminus\set{\text{black-filled elements}}$ in the middle of Figure \ref{figllstr}.)
Since the grids $G'$ and $G$  have the same {``sizes''}, they are isomorphic.  Thinking of  the diagrams,  we can even assume that $G'$ and $G$  are geometrically congruent. Hence, when we add forks to $G$ one by one in order to get $R$, we can simultaneously add forks to 
$G'$ in the same way  and, consequently, also to  $S_7^{(t)}$. In this way, we obtain a slim rectangular lattice $K$  from $S_7^{(t)}$; this follows from \eqref{pbx:sRctGlr}. Note that $K\in \alg S$. 
In the middle of Figure~\ref{figllstr}, $K$ consists of the empty-filled elements, the grey-filled elements, and the  black-filled elements. In $K$, the former interval $G'$ has become an interval isomorphic to $R$. But $R$ is an extension of $L$, whereby $K$ has a sublattice $L'$ such that $L'$ is isomorphic to $L$. In the middle of the figure, 
the elements of $L'$ are the pentagon-shaped larger elements. Note that the original inner coatoms $a_{1}, \dots, a_t$ are also inner coatoms of~$K$.

Next,  for the sake of contradiction, suppose that $L$ is an absolute retract for $\alg S$. Then so is $L'$ since $L'\cong L$. 
Since  $K\in\alg S$ and $L'$ is a sublattice of $K$,  there exists a retraction $f\colon K\to L'$. Let $\Theta:=\set{(x,y)\in K^2: 
f(x)=f(y)}$ be the kernel of $f$. Then $\Theta$ is a congruence of $K$ with exactly $|L'|$ blocks. But $t > |L|=|L'|$, whence there are distinct $i,j\in\set{1,\dots, t}$ such that $a_i$ and $a_j$ belong to the same $\Theta$-block. Hence, $(a_i,a_j)\in\Theta$, implying that $(a_i,1)=(a_i\vee a_i, a_j\vee a_i)\in \Theta$. Thus, the $\Theta$-block $1/\Theta$ of $1$ contains $a_i$. By Gr\"atzer's Swing Lemma, see his paper \cite{ggswinglemma} (alternatively, see   Cz\'edli, Gr\"atzer, and  Lakser \cite{czggghlswing} or Cz\'edli and Makay \cite{czgmakay} for secondary sources), $\set{a_1,\dots,a_t}\subseteq 1/\Theta$. Since congruence blocks are sublattices, $b=a_1\wedge\dots\wedge a_t \in 1/\Theta$. Therefore, using the facts  that  congruence blocks are \emph{convex} sublattices, 
$a_{m+1}\in 1/\Theta$, and $G'$ was originally a subinterval of $[b,a_{m+1}]$ in $S_7^{(t)}$, we obtain that $L'\subseteq [b,a_{m+1}]\subseteq 1/\Theta$ in the lattice $K$. Hence, for any $x,y\in L'$, we have that  $(x,y)\in \Theta$. Consequently, the definition of $\Theta$ and that of a retraction yield that, for any $x,y\in L'$,
$x=f(x)=f(y)=y$. Therefore, $|L|=|L'|=1$, which is a contradiction.
This contradiction implies that neither $L'$, nor $L$ is an absolute retract for $\alg S$,  completing the proof of Theorem~\ref{thmsps}.
\end{proof}

\section{Proving Theorem \ref{thmdst} and its corollaries}\label{sect:dist}

\subsection{Notes before the proof}\label{subsect:nBtPrf} 
This subsection is to enlighten the way from Theorem~\ref{thmsps} to Theorem~\ref{thmdst}. The reader is not expected to check the in-line statements in this subsection; what is needed will be proved or referenced in due course.

In the proof of Theorem \ref{thmsps},  forks play a crucial role. This raises the question what happens if forks are excluded from  \eqref{pbx:sRctGlr}. It follows from Cz\'edli and Schmidt~\cite[Lemma 15]{czgschtvisual} (and the proof of Corollary~\ref{cor:krSzwzlvnlZvl} here) that the lattices we obtain by means of \eqref{pbx:sRctGlr} and \eqref{pbx:sLcRGns} \emph{without} adding forks are exactly the members of $\Dnfin 2$.
But $\Dnfin 2$ is the class of \emph{distributive} slim  semimodular lattices. Hence, utilizing the theory of slim semimodular lattices, the particular case $n=2$ of Theorem ~\ref{thmdst} becomes available with little effort. Although this section is more ambitious by allowing $n\in\Nplu\cup\set{\omega}$, 
the ideas extracted from the theory of slim semimodular lattices and from the proof of Theorem~\ref{thmsps} 
have been decisive in reaching Theorem ~\ref{thmdst}.

\subsection{Auxiliary lemmas}
Unless otherwise explicitly stated, every lattice in this section is assumed to be \emph{finite}.
By an \emph{$n$-dimensional grid} we mean the direct product of  $n$ nontrivial (that is, non-singleton) finite chains. Clearly, the order dimension of an $n$-dimensional grid is $n$. 
2-dimensional grids are simply called grids in Section \ref{sect-thm}. 
For an $n$-dimensional grid $G$ and a maximal element $a\in\Jir G$, the principal ideal $\ideal a$ is a nontrivial chain. Chains of this form will be called the \emph{canonical chains} of $G$.
The following lemma follows  trivially from the fact that in a direct product of finitely many finite chains we compute componentwise. 

\begin{lemma}\label{lemma:cnChgR} If $n\in\Nplu$ and $G$ is an $n$-dimensional grid, then the following assertions hold.
\begin{enumerate}
\item\label{lemma:cnChgRa} $G$ has  exactly $n$ canonical chains; in the rest of the lemma, they will be denoted by $C_1$, \dots, $C_n$.
\item\label{lemma:cnChgRb}  Each element $x$ of $G$ can uniquely be written in the \emph{canonical form}
\begin{equation}
 \parbox{9.2cm}{$x=\cj x1\vee\dots\vee \cj xn\,\,$ where $\cj x1:=x\wedge 1_{C_1}\in C_1$, \dots, $\cj xn:=x\wedge 1_{C_n}\in C_n$; the elements $\cj x1$,\dots,$\cj xn$ are called the \emph{canonical joinands} of $x$.}
\label{eq:cFnfRmsG}
\end{equation}
\item\label{lemma:cnChgRc} For each $i\in\set{1,\dots,n}$, the map $\pi_i\colon G\to C_i$ defined by $x\mapsto \cj xi$ is a surjective homomorphism. 
\item\label{lemma:cnChgRd} The map $G\to C_1\times\dots\times C_n$ defined by $x\mapsto(\cj x1,\dots,\cj  xn)$ is a lattice isomorphism. 
\end{enumerate}
\end{lemma}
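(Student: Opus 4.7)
The plan is to work componentwise throughout. Fix a representation $G = D_1 \times \dots \times D_n$, where each $D_i$ is a nontrivial finite chain with bottom $0_i$ and top $1_i$, so that meets and joins in $G$ are computed coordinatewise. For $i \in \set{1,\dots,n}$, let $\hat e_i := (0_1,\dots,0_{i-1},1_i,0_{i+1},\dots,0_n) \in G$. With this fixed notation, every assertion of the lemma reduces to a transparent coordinate computation.

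For part \eqref{lemma:cnChgRa}, I would first observe that an element $(a_1,\dots,a_n) \in G$ is join-irreducible iff it has a unique lower cover, which by the componentwise covering relation forces exactly one coordinate $a_i$ to be nonzero while all others equal $0_j$. The maximal join-irreducibles are therefore exactly the elements $\hat e_1,\dots,\hat e_n$, all distinct; hence $G$ has exactly $n$ canonical chains $C_i = \ideal \hat e_i$, and each $C_i$ is a chain order-isomorphic to $D_i$ via $(0_1,\dots,0_{i-1},b,0_{i+1},\dots,0_n) \mapsto b$.

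For \eqref{lemma:cnChgRb}, given $x=(x_1,\dots,x_n) \in G$, I would compute $x \wedge 1_{C_i} = x \wedge \hat e_i = (0_1,\dots,0_{i-1},x_i,0_{i+1},\dots,0_n)$, which lies in $C_i$ and by definition equals $\cj x i$. Taking the componentwise join of $\cj x 1,\dots,\cj x n$ recovers $(x_1,\dots,x_n)=x$, giving existence. For uniqueness, if $x = y_1 \vee \dots \vee y_n$ with $y_i \in C_i$, then each $y_i$ has the form $(0_1,\dots,0_{i-1},b_i,0_{i+1},\dots,0_n)$, and the componentwise join forces $b_i = x_i$, so $y_i = \cj x i$. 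Part \eqref{lemma:cnChgRc} then follows immediately, since after identifying $C_i$ with $D_i$ via the isomorphism of \eqref{lemma:cnChgRa}, the map $\pi_i\colon x \mapsto \cj x i$ becomes the $i$-th coordinate projection $D_1\times\dots\times D_n \to D_i$, which is patently a surjective lattice homomorphism. Finally, \eqref{lemma:cnChgRd} follows by combining the componentwise projections: the map $x \mapsto (\cj x 1,\dots,\cj x n)$ is a homomorphism by \eqref{lemma:cnChgRc}, and it is bijective with inverse $(y_1,\dots,y_n) \mapsto y_1 \vee \dots \vee y_n$ by \eqref{lemma:cnChgRb}.

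There is no real obstacle here; the only thing to be careful about is keeping the identification of $G$ with its given direct-product structure explicit enough that phrases like ``the $i$-th coordinate'' are well-defined, while also maintaining the distinction between $C_i \subseteq G$ and the abstract factor $D_i$. Once notation is fixed, every step is a routine componentwise verification justified by the hint preceding the lemma.
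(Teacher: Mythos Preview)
Your proposal is correct and is exactly the approach the paper indicates: the paper does not give a detailed proof but simply remarks that the lemma ``follows trivially from the fact that in a direct product of finitely many finite chains we compute componentwise,'' and you have spelled out precisely that componentwise verification.
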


The notation $\cj x1$, \dots, $\cj xn$ will frequently be used, provided the canonical chains of an $n$-dimensional grid  are fixed. The map $\pi_i$ above is often called the \emph{$i$-th projection}.
Note that, for an $n$-dimensional grid $G$,   $\Jir G$ is the disjoint union of $C_1\setminus\set 0$, \dots, $C_n\setminus\set 0$. Thus, the \emph{set} $\set{C_1,\dots,C_n}$ of the  canonical chains is uniquely determined, and only the \emph{order} of these chains needs fixing.   
We also need the following lemma; the sublattices of a chain are called \emph{subchains}.

\begin{lemma}\label{lemma:gRd}
Assume that $n\in\Nplu$,  $L$ and $K$ are $n$-dimensional grids, and $L$ is a sublattice of $K$. Then there are nontrivial subchains $E_1$, \dots, $E_n$ of the canonical chains $C_1$, \dots, $C_n$ of $K$, respectively, such that 
\begin{equation}
L=\set{x\in K: \cj x1\in E_1,\dots, \cj x n\in E_n}.
\label{eq:dcEdjKwmC}
\end{equation} 
\end{lemma}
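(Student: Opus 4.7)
My plan is to show that the canonical chains of $L$ sit inside those of $K$, up to a permutation of indices, which then forces $L$ to be the advertised ``box''. First I would fix notation: let $D_1,\dots,D_n$ be the canonical chains of $L$ with tops $b_i:=\max D_i$. Since $L$ is itself an $n$-dimensional grid, in $L$ (hence in $K$) we have $b_i\wedge b_j=0_L$ for $i\neq j$ and $b_1\vee\dots\vee b_n=1_L$. Put $p_k:=\cj{0_L}k$ and $q_k:=\cj{1_L}k$, so that $L$ lies in the interval $[0_L,1_L]_K=[p_1,q_1]\times\dots\times[p_n,q_n]$. Because $\dim L=n$, each factor $[p_k,q_k]$ must be nontrivial; that is, $p_k<q_k$ for every $k$.

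The heart of the argument is to read off the coordinates of each $b_i$ in $K$. By Lemma~\ref{lemma:cnChgR}(iv), meets and joins in $K$ are componentwise, so the identities above translate into $\min(\cj{b_i}k,\cj{b_j}k)=p_k$ for $i\neq j$ and $\max_i\cj{b_i}k=q_k$ for every $k$. The first relation forces, at each coordinate $k$, at most one index $i$ with $\cj{b_i}k>p_k$; the second forces at least one such index, and its value must then be $q_k$. Since $b_i>0_L$ for each $i$, every $i$ occurs as the distinguished index for some $k$, and a pigeonhole count on $n$ indices versus $n$ coordinates produces a permutation $\sigma$ of $\{1,\dots,n\}$ such that $\cj{b_i}{\sigma(i)}=q_{\sigma(i)}$ and $\cj{b_i}k=p_k$ for $k\neq\sigma(i)$.

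After relabeling so that $\sigma=\operatorname{id}$, every $d\in D_i=\ideal_L b_i$ satisfies $d\leq b_i$ in $K$, which forces $\cj dk=p_k$ for $k\neq i$ and $\cj di\in[p_i,q_i]\subseteq C_i$. I would then define $E_i:=\pi_i(L)$, where $\pi_i\colon K\to C_i$ is the $i$-th projection; this is a subchain of $C_i$ containing both $p_i$ and $q_i$, hence nontrivial. The inclusion $L\subseteq\{x\in K:\cj xi\in E_i\text{ for all }i\}$ is immediate. For the converse, given such an $x$, I would pick $y_i\in D_i$ with $\pi_i(y_i)=\cj xi$ (possible because $E_i=\pi_i(D_i)$, since the other $D_j$'s contribute only $p_i$ in coordinate $i$); then $y:=y_1\vee\dots\vee y_n\in L$, and $\pi_i(y)=\cj xi$ for every $i$ because $\pi_i(y_j)=p_i$ for $j\neq i$. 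Hence $y=x$ by Lemma~\ref{lemma:cnChgR}(iv).

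The hardest step is the middle paragraph, where the disjointness $b_i\wedge b_j=0_L$ and the covering $\bigvee b_i=1_L$ must be combined to match the $n$ maximal join-irreducibles of $L$ with the $n$ canonical chains of $K$; the equal dimensionality of $L$ and $K$ is essential for the pigeonhole to produce a bijection rather than merely an injection.
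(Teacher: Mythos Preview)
Your proof is correct and follows the same overall strategy as the paper's: exploit the meet-zero relations among distinguished elements of the canonical chains of $L$, together with a pigeonhole count on the $n$ coordinates of $K$, to obtain a bijection between the canonical chains of $L$ and those of $K$, and then read off the subchains $E_i$. The only difference is in execution: the paper works with the \emph{atoms} $\atom j$ of (an abstract copy of) $L$, uses only $\atom j\wedge\atom k=\vec 0$ for the pigeonhole, and then needs a separate step to pass from atoms to all $\ata j i$; you instead work with the \emph{tops} $b_i$ of the canonical chains of $L$ and use the additional relation $b_1\vee\dots\vee b_n=1_L$, which pins down every coordinate of each $b_i$ at once and makes the verification $E_i=\pi_i(L)=\pi_i(D_i)$ a bit shorter---the resulting sets $E_i$ coincide with the paper's.
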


The visual meaning of Lemma \ref{lemma:gRd} is that an $n$-dimensional grid cannot be embedded into another $n$-dimensional grid in a ``skew way''.

\begin{proof}[Proof of Lemma~\ref{lemma:gRd}] Assume that $n\in\Nplu$,  $L$ and $K$ are $n$-dimensional grids, and $L$ is a sublattice of $K$. Then  there are 
integers $t_1\geq 2$, \dots, $t_n\geq 2$ and chains 
$H_i=\set{0,1,\dots, t_i-1}$ (with the natural ordering of integer numbers) such that we can pick an isomorphism 
 $\phi\colon H_1\times\dots\times H_n\to L$. 
The canonical chains of $K$ will be denoted by $C_1$, \dots, $C_n$.
The least element of  $H_1\times\dots\times H_n$ and that of $L$ are $\vec 0:=(0,\dots,0)$ and    $0_L=\phi(\vec 0)$, respectively. For $(i_1,\dots,i_n)\in H_1\times\dots\times H_n$, we write $\phi(i_1,\dots,i_n)$ rather than the more precise $\phi((i_1,\dots,i_n))$. 
For $j\in\set{1,\dots,n}$ and $i\in H_j\setminus\set 0$, we are going to use the notation
\begin{equation}
\ata j i :=(\,\underbrace{0,\,\,\dots,\,\,0}_{j-1\text{ zeros}}\,,\,\,i,\,\,\underbrace{0,\,\,\dots,\,\,0}_{n-j\text{ zeros}}\,),
\label{eq:TmBjblChn}
\end{equation}
Clearly, 
\begin{equation}
\Jir{H_1\times\dots\times H_n}=\set{\ata j i: j\in\set{1,\dots,n}\text{ and }i\in H_i\setminus\set 0}.
\end{equation}
It is also clear that the atoms of $H_1\times\dots\times H_n$ are 
$\atom 1$, \dots, $\atom n$. 
With the notation given in \eqref{eq:cFnfRmsG},
for $j\in \set{1,\dots,n}$ we let 
\begin{equation}
I_j:=\set{i\in\set{1,\dots,n}: \cj{\phi(\atom j)}i > \cj{0_L}i }.
\label{eq:NshMdhzDfdPh}
\end{equation}
Since $\atom j>\vec 0$ and $\phi$ is an isomorphism, $I_j\neq\emptyset$. 
We claim  that
\begin{equation}
\text{if $j\neq k\in\set{1,\dots,n}$, then $I_j\cap I_k=\emptyset$.}
\label{eq:szTrvspr}
\end{equation}
For the sake of contradiction, suppose that $j\neq k$ but $i\in I_j\cap I_k$. Then $\cj{\phi(\atom j)}i > \cj{0_L}i$ and $\cj{\phi(\atom k)}i > \cj{0_L}i$. Since $j$ and $k$   play a symmetrical role and the elements $\cj{\phi(\atom j)}i$ and $\cj{\phi(\atom k)}i$ belonging to the same canonical chain $C_i$ of $K$ are comparable, we can assume that  
$\cj {0_L}i < \cj{\phi(\atom j)}i \leq \cj{\phi(\atom k)}i$.
Hence, using Lemma~\ref{lemma:cnChgR}\eqref{lemma:cnChgRc},
\begin{align*}
\cj{\phi(\atom j)}i 
&=  \cj{\phi(\atom j)}i \wedge \cj{\phi(\atom k)}i  =
 \cj{\bigl(\phi(\atom j)\wedge \phi(\atom k)\bigr)}i \cr
&=\cj{\phi(\atom j\wedge \atom k)}i=\cj{\phi(\vec 0)}i=\cj{0_L}i,
\end{align*}
contradicting \eqref{eq:NshMdhzDfdPh} and proving \eqref{eq:szTrvspr}.
Using that $I_1$, \dots, $I_n$ are nonempty subsets of the finite set $\set{1,\dots, n}$ and  they are pairwise disjoint by  \eqref{eq:szTrvspr},  we have that 
\[n\leq |I_1|+\dots+ |I_n|=|I_1\cup\dots\cup I_n|\leq |\set{1,\dots, n}|=n. 
\]
Hence, none of the $I_1$, \dots, $I_j$ can have more than one element, and we obtain that  $|I_1|=\dots=|I_n|=1$. Therefore, after changing the order of the direct factors in $H_1\times\dots\times H_n$ and so also the order of the atoms $\atom 1$, \dots, $\atom n$ if necessary, we can write that $I_1=\set 1$, \dots, $I_n=\set n$. 
This means that, for all $j,k\in\set{1,\dots,n}$,
\begin{equation}
\cj{\phi(\ata j1)}k\geq \cj{0_L}k, \text{ and }
\cj{\phi(\ata j1)}k > \cj{0_L}k \iff k=j.  
\label{eq:cSkmpSzkDfcRsb}
\end{equation}

Next, we generalize \eqref{eq:cSkmpSzkDfcRsb} by claiming that for $j,k\in\set{1,\dots,n}$ and $i\in H_j\setminus\set 0$, 
\begin{equation}
\cj{\phi(\ata j i)}k\geq \cj{0_L}k, \text{ and }
\cj{\phi(\ata j i)}k > \cj{0_L}k \iff k=j.  
\label{eq:rSmnVmsKrp}
\end{equation}
To prove this, we can assume that $i>1$ since otherwise \eqref{eq:cSkmpSzkDfcRsb} applies. 
Using  \eqref{eq:cSkmpSzkDfcRsb} together with the fact that  $\pi_k$ and $\pi_j$ defined in Lemma~\ref{lemma:cnChgR}\eqref{lemma:cnChgRc} are  order-preserving, we obtain that 
$\cj{\phi(\ata ji)}k\geq \cj{\phi(\ata j1)}k\geq \cj{0_L}k$ for all $k\in\set{1,\dots,n}$, as required, and $\cj{\phi(\ata ji)}j\geq \cj{\phi(\ata j1)}j > \cj{0_L}j$. So all we need to show is that $\cj{\phi(\ata ji)}k > \cj{0_L}k$ is impossible if $k\neq j$. For the sake of contradiction, suppose that $k\neq j$, $k,j\in\set{1,\dots,n}$, and $\cj{\phi(\ata ji)}k > \cj{0_L}k$. 
We also have that $\cj{\phi(\ata ki)}k > \cj{0_L}k$ since
$\pi_k$ is order-preserving and $\cj{\phi(\ata k1)}k > \cj{0_L}k$ by \eqref{eq:cSkmpSzkDfcRsb}. Belonging to the same canonical chain of $K$, the elements $\cj{\phi(\ata ji)}k$ and $\cj{\phi(\ata ki)}k$ are comparable, whence their meet is one of the meetands. Thus, $\cj{\phi(\ata ji)}k\wedge \cj{\phi(\ata ki)}k > \cj{0_L}k$.  Hence, using that $\phi$ and $\pi_k$ are homomorphisms and $\ata ji \wedge \ata k i=\vec 0$, we obtain that 
\begin{align*}
\cj{0_L}k &< \cj{\phi(\ata j i)}k\wedge \cj{\phi(\ata k i)}k =
 \cj{\bigl(\phi(\ata j i) \wedge \phi(\ata k i)\bigr)}k \cr
&=  \cj{\phi(\ata j i \wedge \ata k i)}k =  \cj{\phi(\vec 0)}k 
=\cj{0_L}k,
\end{align*}
which is a contradiction proving \eqref{eq:rSmnVmsKrp}.

Next, after extending  the notation given in  \eqref{eq:TmBjblChn} by letting $\ata j 0:=\vec 0$ for $j\in\set{1,\dots,n}$, we have that 
\begin{equation}
\cj{\phi(\ata k i )}k\geq \cj{0_L}k \,\,
\text{ for all }k\in\set{1,\dots,n}\text{ and }i\in H_k
\label{eq:tdBlrbKlR}
\end{equation}
since $\ata k i\geq \ata k 0=\vec 0$,  $\phi$ and $\pi_k$ are order-preserving maps, and $0_L=\phi(\vec 0)$. 
For $j\in\set{1,\dots,n}$, we define
\begin{equation}
E_j:=\set{ \cj{\phi(\ata j i )}j:  i\in H_j}.
\label{eq:Eunderscorej}
\end{equation}
By \eqref{eq:cFnfRmsG}, $E_j\subseteq C_j$, that is, $E_j$ is a subchain of $C_j$ for all $j\in\set{1,\dots,n}$.  
We are going to show that  these $E_j$ satisfy \eqref{eq:dcEdjKwmC}.

First, assume that $x\in K$ is of the form
$x=\cj x 1\vee\dots\vee \cj x n$ such that $\cj x j\in E_j$ for all $j\in\set{1,\dots, n}$. Then, for each $j\in\set{1,\dots,n}$, there is an $i(j)\in H_j$ such that $\cj x j =  \cj{\phi(\ata j {i(j)} )}j$. Using what we already have, let us compute:
\allowdisplaybreaks{
\begin{align}
x&=\cj x 1\vee\dots\vee \cj x n=\cj{\phi(\ata 1 {i(1)} )}1\vee\dots\vee \cj{\phi(\ata n {i(n)} )}n 
\label{eq:fPldMsTa}
\\
&\eqeqref{eq:tdBlrbKlR} \cj{\phi(\ata 1 {i(1)})}1   
\vee\dots\vee \cj{\phi(\ata n {i(n)} )}n  \vee \cj{0_L}1 \vee\dots \vee  \cj{0_L}n 
\cr
&\eqeqref{eq:rSmnVmsKrp}
\cj{\phi(\ata 1 {i(1)})}1   
\vee\dots\vee \cj{\phi(\ata n {i(n)} )}n  
\cr
&\phantom{m m i}\vee
\cj{\phi(\ata 1 {i(1)})}2\vee\dots\vee \cj{\phi(\ata 1 {i(1)})}n  
\cr
&\phantom{m m i}\vee\dots\vee 
\cj{\phi(\ata n {i(n)}}1\vee\dots\vee \cj{\phi(\ata n {i(n)})}{n-1} 
\cr
&\eqeqref{eq:cFnfRmsG}
\phi(\ata 1 {i(1)})\vee \dots\vee \phi(\ata n {i(n)})
=\phi(  \ata 1 {i(1)}\vee \dots\vee \ata n {i(n)} )
\label{eq:fPldMsTb}
\end{align}
}%
Since $\phi(  \ata 1 {i(1)}\vee \dots\vee \ata n {i(n)} )
\in\phi(H_1\times\dots\times H_n)=L$, the computation from \eqref{eq:fPldMsTa} to \eqref{eq:fPldMsTb} shows the  ``$\supseteq$"  part of \eqref{eq:dcEdjKwmC}.

To show the reverse inclusion, assume that $x\in L$. Applying Lemma \ref{lemma:cnChgR}\eqref{lemma:cnChgRb} to the $\phi$-preimage of $x$, we obtain the existence of $i(1),\dots,i(n)$ such that $x=\phi(  \ata 1 {i(1)}\vee \dots\vee \ata n {i(n)} )$. 
Reading the computation from \eqref{eq:fPldMsTb} to \eqref{eq:fPldMsTa} upward, it follows that  $x=\cj{\phi(\ata 1 {i(1)})}1\vee\dots\vee \cj{\phi(\ata n {i(n)})}n$. By the uniqueness part of Lemma \ref{lemma:cnChgR}\eqref{lemma:cnChgRb}, 
$\cj x 1= \cj{\phi(\ata 1 {i(1)} )}1$, \dots, $\cj x n=\cj{\phi(\ata n {i(n)})}n$. Combining this with \eqref{eq:Eunderscorej}, 
we have that $\cj x 1\in E_1$, \dots, $\cj x n\in E_n$. This yields the ``$\subseteq$'' inclusion for  \eqref{eq:dcEdjKwmC} and completes the proof of Lemma~\ref{lemma:gRd}.
\end{proof}

The following easy lemma sheds more light on the categories $\Dncovzo n$, $n\in\Nplu\cup\set\omega$. The \emph{length} of a lattice $M$ is denoted by $\length M$; for definition (in the finite case) we mention that if $C$ is a maximum-sized chain in $M$, then $\length M +1 = |C|$.

\begin{lemma}\label{lemmaKzTfPsz} Assume that $K, L$ are finite semimodular lattices (in particular, finite distributive lattices) and $f\colon K\to L$ is a map. Then the following two assertions hold.
\begin{enumerate}
\item\label{lemmaKzTfPsza}  If  $f$ is a \covzo, then $f$ is a \embcovzo{} and $\length K=\length L$.
\item\label{lemmaKzTfPszb}  If $f$ is a lattice embedding and $\length K=\length L$, then $f$ is a \covzo{}.
\item\label{lemmaKzTfPszc}  If $L$ is a sublattice of $K$ such that the map $L\to K$ defined by $x\mapsto x$ is a \embcovzo{} and $f$ is a retraction, then $f$ is a lattice isomorphism (and, in particular, $f$ is also a \embcovzo). 
\end{enumerate}
\end{lemma}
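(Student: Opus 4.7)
The plan is to use Jordan--H\"older in both $K$ and $L$. Take a maximal chain $0=a_0\prec a_1\prec\cdots\prec a_m=1$ of $K$, so $m=\length K$. Since $f$ is cover-preserving and $\{0,1\}$-preserving, its image $0_L=f(a_0)\prec f(a_1)\prec\cdots\prec f(a_m)=1_L$ is a maximal chain of $L$, forcing $\length L=m=\length K$. For injectivity, if $f(a)=f(b)$ then $f(a\vee b)=f(a)=f(a\wedge b)$, so we may assume $a\le b$; any maximal chain from $a$ to $b$ of length $r\ge 1$ would map under the cover-preserving $f$ to a strictly increasing chain of length $r$ in $L$, contradicting $f(a)=f(b)$.

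\textbf{Part (ii).} With $f$ an embedding and $\length K=\length L=m$, the image of a maximal chain $0_K=a_0\prec\cdots\prec a_m=1_K$ is a strict chain of $m+1$ elements in $L$ having length $m$. Extending it downward to $0_L$ or upward to $1_L$ within $L$ would yield a chain in $L$ of length greater than $\length L$ unless both extensions are trivial, so $f(0_K)=0_L$ and $f(1_K)=1_L$; the image is then a maximum-length chain from $0_L$ to $1_L$, i.e.\ a maximal chain of $L$, with consecutive elements covers. Embedding any given cover $x\prec y$ of $K$ into such a maximal chain yields $f(x)\prec f(y)$.

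\textbf{Part (iii).} Part (i) applied to the inclusion $\iota\colon L\to K$ gives $\length L=\length K$. The retraction $f$ is surjective onto $L$ (it fixes $L$ pointwise), so $K/\ker f\cong L$ and hence $\length(K/\ker f)=\length K$. The plan is to invoke the standard fact that a congruence $\Theta$ on a finite semimodular lattice with $\length(K/\Theta)=\length K$ must be trivial: a maximal chain $\bar b_0\prec\cdots\prec\bar b_m$ of $K/\Theta$ lifts, by the cover-lifting property of semimodular quotients, to a maximal chain $0_K=b_0\prec b_1\prec\cdots\prec b_m=1_K$ of $K$ whose elements are representatives of pairwise distinct $\Theta$-blocks, and a case analysis based on semimodularity and the convexity of $\Theta$-blocks then forces each block to be the singleton $\{b_i\}$. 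Thus $\ker f=\Delta$, $f$ is a bijection from $K$ onto $L\subseteq K$, which yields $K=L$ and $f=\id K$, a lattice isomorphism. The main obstacle is justifying this length-preserving-congruence step rigorously; a direct proof that $f$ is cover-preserving (so that (i) could be applied to $f$ itself) seems to require essentially the same structural analysis of congruence blocks in semimodular lattices.
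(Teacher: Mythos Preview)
Your arguments for parts (i) and (ii) are correct. For (i) you give a pleasantly elementary injectivity proof, avoiding the paper's use of the Gr\"atzer--Nation theorem that maximal chains are congruence-determining sublattices; your reduction to comparable $a\le b$ and then to a covering chain is cleaner. For (ii) your argument is essentially the paper's length-counting contradiction, with the added merit that you explicitly verify $f(0_K)=0_L$ and $f(1_K)=1_L$.

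Part (iii), however, is not a proof: you yourself flag the ``standard fact'' that $\length(K/\Theta)=\length K$ forces $\Theta=\Delta_K$ as the main obstacle, and your sketch (``cover-lifting property of semimodular quotients'' and an unspecified ``case analysis'') does not establish it. The cover-lifting property you invoke is not a general feature of semimodular quotients; what actually works is lifting the maximal chain of $K/\Theta$ to a chain in $K$ by successive joins with representatives, and then observing that a chain of length $\length K$ must already be a maximal chain of $K$. At that point you have a maximal chain whose elements lie in pairwise distinct $\Theta$-blocks, i.e.\ $\Theta$ restricted to this chain is the diagonal---and now the Gr\"atzer--Nation theorem finishes the job immediately. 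So the missing ingredient is precisely the tool you bypassed in part (i).

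The paper takes a more direct route for (iii): rather than passing through $K/\ker f$, it shows $f$ is cover-preserving by contradiction. If $a\prec b$ but $f(a)=f(b)$, then $\ker f$ is nontrivial, hence by Gr\"atzer--Nation it is nontrivial on a maximal chain $U$ of $L$ (which is also maximal in $K$ by hypothesis), collapsing some $u_{i-1}\prec u_i$; but $f$ fixes $L$ pointwise, a contradiction. If instead $\length[f(a),f(b)]\ge2$, the same length-sum argument as in (ii) applies. Then part (i) gives that $f$ is an embedding, and $|L|\le|K|\le|L|$ yields the isomorphism. Either approach ultimately rests on Gr\"atzer--Nation; the paper's is shorter because it avoids the detour through the quotient.
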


\begin{proof} First, recall the following concept. A sublattice $S$ of a lattice $M$ is a \emph{congruence-determining sublattice} of $M$ if any congruence $\alpha$ of $M$ is uniquely determined by its restriction $\restrict \alpha S:=\set{(x,y)\in S^2: (x,y)\in\alpha}$. By Gr\"atzer and Nation~\cite{gr-nation},
\begin{equation}
\parbox{7.2cm}{every maximal chain of a finite semimodular lattice is a congruence-determining sublattice.}
\label{pbx:gRnNt}
\end{equation}

To prove part \eqref{lemmaKzTfPsza}, let $f\colon K\to L$ be a \covzo.  We know from \eqref{pbx:lnPrsHm} that $\length K=\length L$. Let $\Theta:=\set{(x,y)\in K^2: f(x)=f(y)}$ be the kernel of $f$, and take a maximal chain $C$ in  $K$. For $c,d\in C$ such that $c\prec d$, we have that $(c,d)\notin\Theta$ since  $f(c)\prec f(d)$. Hence, using that the blocks of $\restrict\Theta C$ are convex sublattices of $C$, 
it follows that $\restrict \Theta C=\diag C$. Applying \eqref{pbx:gRnNt}, we have that $\Theta=\diag K$. Thus, $f$ is injective, proving part \eqref{lemmaKzTfPsza}.

We prove part \eqref{lemmaKzTfPszb} by way of contradiction. Suppose that in spite of the assumptions, $f$ is not cover-preserving. Pick $a,b\in K$ such that $a\prec b$ but $f(a)\not\prec f(b)$. The injectivity of $f$ rules out that $f(a)=f(b)$. Hence, the interval $[f(a),f(b)]$ is of length at least 2.  Extend $\set{a,b}$ to a maximal chain $C=\set{0=c_0,c_1,\dots, c_k=1}$ of $K$ such that $a=c_{i-1}$, $b=c_i$, and $c_0\prec c_1\prec\dots\prec c_k$. By the Jordan--H\"older chain condition, $k=\length K$. 
Using the injectivity of $\phi$ again and the fact that $\phi$ is order-preserving,  $\length{[f(c_{j-1}), f(c_j)]}\geq 1$ for all $j\in\set{1,\dots, k}$. So the summands in 
\begin{equation}
\length K \geq \sum_{j=1}^k \length{[f(c_{j-1}), f(c_j)]}
\label{eqszGhsplBBszlHk} 
\end{equation} 
are positive integers but the $i$-th summand is at least two. 
Therefore, this sum and $\length K$ are at least $k+1$, which is a contradiction completing the proof of 
part  \eqref{lemmaKzTfPszb}. 

Next, to prove part \eqref{lemmaKzTfPszc}, observe that $0_L=0_K$ and $1_L=1_K$. Hence, since $f$ is a retraction, $f(0_K)=0_L$ and $f(1_K)=1_L$, as required. We are going to show that whenever $a\prec b$ in $K$, then $f(a)\prec f(b)$ in $L$. For the sake of contradiction, suppose that $a\prec b$ in $K$ but $f(a)\not\prec f(b)$ in $L$. Then there are two cases (since $f$ is order-preserving): either we have that $f(a)=f(b)$, or $f(a)<f(b)$ and the length of the interval $[f(a),f(b)]$ is at least 2. 
For each of these two cases, 
let $\Theta$ denote the kernel $\set{(x,y)\in K^2: f(x)=f(y)}$ of $f$, and let $U=\set{0=u_0,u_1,\dots, u_k=1}$ be a maximal chain of $L$. It is also a maximal chain of $K$ since the embedding $L\to K$ defined by $x\mapsto x$ is a \covzo.
 We know from the Jordan--H\"older chain condition, $k=\length L$. 

First, we deal with the first case, $f(a)=f(b)$. Then $(a,b)\in\Theta$ shows that $\Theta\neq \diag K$. 
We have that  $\restrict\Theta U\neq \diag U$ since \eqref{pbx:gRnNt} applies. Using that the blocks of $\restrict\Theta U$ are convex sublattices of $U$, it follows that $(u_{i-1},u_i)\in \restrict\Theta U$ for some $i\in\set{1,\dots,n}$. This means that $f(u_{i-1})=f(u_i)$. 
This equality leads to a contradiction since $f$ is a retraction and so  $u_{i-1}=f(u_{i-1})=f(u_{i})=u_{i}$. 
Since the only conditions tailored to $a$ and $b$ were $a\prec b$ and $f(a)=f(b)$, we have also obtained that 
\begin{equation}
\text{if $a'\prec b'$ in $K$, then $f(a')\neq f(b')$.}
\label{eq:pCpszLklChL}
\end{equation}

Next, we focus on the case $f(a)<f(b)$ and $\length{[f(a),f(b)]}\geq 2$. As in the proof of part  \eqref{lemmaKzTfPszb}, we can extend $\set{a,b}$ to a maximal chain $C$ of $K$. 
Since $U$ is also a maximal chain of $K$,  the Jordan--H\"older chain condition gives that  $\length C=\length K=\length U=k$. This allows us to write that  $C=\set{0=c_0,c_1,\dots, c_k=1}$ where   $a=c_{i-1}$ and $b=c_i$ for some $i\in\set{1,\dots,k}$, and $c_0\prec c_1\prec\dots\prec c_k$. By the Jordan--H\"older chain condition, \eqref{eqszGhsplBBszlHk} is still valid. 
Each summand  in \eqref{eqszGhsplBBszlHk} is at least 1 by 
 \eqref{eq:pCpszLklChL}, but the $i$-th summand is 
$\length{[f(c_{i-1}),f(c_i)]}=\length{[f(a),f(b)]}\geq 2$. Hence, $k=\length K \geq k+1$, which is a contradiction again. 
In this way, we have shown that $f$ is a \covzo. 

Applying the already proven part \eqref{lemmaKzTfPsza} of  Lemma~\ref{lemmaKzTfPsz}, we obtain that $f$ is a \embcovzo. 
This yields that $|K|\leq |L|$. But we also have that $|L|\leq |K|$ since $L$ is a sublattice of $K$. Thus, $|K|=|L|$, whence 
the embedding $f$ is a lattice isomorphism since these lattices are finite. 
This completes the proof of part \eqref{lemmaKzTfPszc}
and that of Lemma~\ref{lemmaKzTfPsz}.
\end{proof}

\begin{lemma}\label{lemma:nLskPcjktNLTn}
If $L$ is a nontrivial finite distributive lattice with  order dimension $n\in\Nplu$, then there is a \embcovzo{} of $L$ into an $n$-dimensional grid $G$. 
\end{lemma}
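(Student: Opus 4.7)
The plan is to combine Birkhoff's representation of finite distributive lattices with Dilworth's theorem to produce a $\set{0,1}$-embedding into an $n$-dimensional grid, and then to upgrade it to a cover-preserving embedding by appealing to Lemma~\ref{lemmaKzTfPsz}\eqref{lemmaKzTfPszb} via a length calculation.

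First, invoking \eqref{eqtxt:dimDwJD}, the hypothesis $\dim L = n$ translates to $\width{\Jir L} = n$. By Dilworth's theorem \cite{dilworth}, I can partition $\Jir L$ into $n$ pairwise disjoint chains $P_1,\dots,P_n$; each $P_i$ is nonempty, for otherwise $\Jir L$ would be covered by fewer than $n$ chains, contradicting $\width{\Jir L}=n$. For each $i$, let $C_i$ denote the chain $\alg O(P_i)$ of all down-sets of $P_i$; then $|C_i| = |P_i|+1 \geq 2$, so $G := C_1 \times \cdots \times C_n$ is an $n$-dimensional grid.

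Second, identifying $L$ with $\alg O(\Jir L)$ via Birkhoff's representation, I would define
\[
f \colon L \to G, \qquad f(S) := (S\cap P_1,\,\dots,\,S\cap P_n).
\]
Since each $P_i$ carries the order induced from $\Jir L$, the intersection $S \cap P_i$ is automatically a down-set of $P_i$, so $f$ lands in $G$. The map $f$ is a $\set{0,1}$-homomorphism since joins and meets in both $L$ and $G$ are computed as set-theoretic unions and intersections, and since $f(\emptyset) = 0_G$ and $f(\Jir L) = 1_G$. Injectivity is immediate from the fact that $\set{P_1,\dots,P_n}$ partitions $\Jir L$: if $f(S)=f(T)$, then $S \cap P_i = T \cap P_i$ for every $i$, whence $S = \bigcup_i (S \cap P_i) = \bigcup_i (T \cap P_i) = T$.

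Third, a direct count gives
\[
\length L = |\Jir L| = \sum_{i=1}^n |P_i| = \sum_{i=1}^n (|C_i|-1) = \length G,
\]
so Lemma~\ref{lemmaKzTfPsz}\eqref{lemmaKzTfPszb} upgrades the embedding $f$ to a \embcovzo, completing the proof. I do not anticipate any real obstacle in this outline; the only delicate point is arranging that the $C_i$ are \emph{just} large enough for the length equality to hold, and this is precisely what the choice $|C_i| = |P_i|+1$ achieves.
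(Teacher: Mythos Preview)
Your proof is correct and follows the same overall strategy as the paper's: invoke \eqref{eqtxt:dimDwJD} and Dilworth's theorem to decompose $\Jir L$ into $n$ chains, build the grid from these chains, produce a lattice embedding, and then upgrade it to a \embcovzo{} via the length count and Lemma~\ref{lemmaKzTfPsz}\eqref{lemmaKzTfPszb}. The execution differs at one point worth noting. The paper defines the embedding directly inside $L$, sending $x$ to $(x_1,\dots,x_n)$ where $x_i$ is the largest element of $\Ep i\cap\ideal x$; it then checks by hand, using join-primality of join-irreducibles as in \eqref{eq:fLktnhLdSrbK}, that this map is a homomorphism and that $x=x_1\vee\dots\vee x_n$ for injectivity. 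Your detour through Birkhoff's representation $L\cong\{\text{down-sets of }\Jir L\}$ makes these verifications essentially automatic, since joins and meets become unions and intersections and the partition property gives injectivity for free. The paper's argument is slightly more self-contained; yours is noticeably shorter.
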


\begin{proof}[Proof of Lemma \ref{lemma:nLskPcjktNLTn}] By \eqref{eqtxt:dimDwJD}, $\width{\Jir L}=n$. 
It follows from Dilworth \cite[Theorem  1.1]{dilworth}, mentioned already in Subsection \ref{subsect:Dfdim}, that there are chains $C_1$, \dots, $C_n$ in $\Jir L$ such that 
$\Jir L=C_1\cup\dots\cup C_n$. We define $E_1$, \dots, $E_n$ by induction as follows:
\[
E_1:=C_1 \text{ and, for $i\in\set{2,\dots,n}$, }  E_i:=C_i\setminus(C_1\cup\dots\cup C_{i-1}).
\]
We show by an easy induction that
\begin{equation}
\parbox{7.6cm}{for $i\in\set{1,\dots,n}$,
$E_1\cup\dots\cup E_i=C_1\cup\dots\cup C_i$,  and the sets 
$E_1$, \dots, $E_i$  are pairwise disjoint.}
\label{pbx:szWrknCSkBrGlnD}
\end{equation}
Since this is trivial for $i=1$, assume that $i\in\set{2,\dots, n}$ and   \eqref{pbx:szWrknCSkBrGlnD} holds for $i-1$. Then 
$E_1\cup \dots \cup E_{i-1}\cup E_i=  C_1\cup \dots \cup C_{i-1}\cup (C_i\setminus(C_1\cup\dots\cup C_{i-1}))=C_1\cup\dots\cup C_i$
shows the equality in \eqref{pbx:szWrknCSkBrGlnD} for $i$. The sets $E_1$, \dots, $E_{i-1}$ are pairwise disjoint by the induction hypothesis, while $E_i$ is disjoint from them because of $ E_i:=C_i\setminus(C_1\cup\dots\cup C_{i-1}) = C_i\setminus(E_1\cup\dots\cup E_{i-1})$.
This shows the validity of \eqref{pbx:szWrknCSkBrGlnD}. 

Next, with $\Ep i:=E_i\cup\set 0$ for $i\in \set{1,\dots,n}$ and $0=0_L\notin E_i$, we define $G:=\Ep 1\times\dots \times \Ep n$. 
Since  $L$ is nontrivial (that is, $|L|>1$), we have that $|\Ep i|\geq 2$ and so $G$ is an $n$-dimensional grid. Clearly, $|\Jir G|=|E_1|+\dots+|E_n|$. This equality and \eqref{pbx:szWrknCSkBrGlnD} give that 
$|\Jir G|=|E_1\cup\dots\cup E_n|=|C_1\cup\dots\cup C_n|=|\Jir L|$.
We know from the folklore or from Gr\"atzer~\cite[Corollary 112]{r:Gr-LTFound} that 
\begin{align}
&\parbox{7.2cm}{the length of a finite distributive lattice equals the number of its join-irreducible elements,}\label{pbx:lJrsBlTlgR}\\
&\text{whereby $G$ and $L$ are of the same length.}
\label{eqtxt:sHmlFksRb}
\end{align}

For  $x\in L$ and $i\in\set{1,\dots n}$, let $x_i$ stand for the largest element of $\Ep i\cap \ideal x$; this makes sense since $\Ep i$ is a chain of $L$ and $0\in \Ep i\cap \ideal x$ shows that $\Ep i\cap \ideal x\neq \emptyset$. We are going to show that 
\begin{equation}
\parbox{6.5cm}{the map $\phi\colon L\to G$ defined by the rule $x\mapsto (x_1,\dots,x_n)$ is a lattice embedding.}
\label{pbx:zgmSzcskHt}
\end{equation} 
To prove \eqref{pbx:zgmSzcskHt}, let $x,y\in L$. Denote $x\wedge y$ and $x\vee y$ by $u$ and $v$, respectively.
We have that $\phi(x)=(x_1,\dots,x_n)$ and $\phi(y)=(y_1,\dots,y_n)$. Here $y_i$ is the largest element of $\Ep i\cap \ideal y$, and analogous notation applies for $\phi(u)$ and $\phi(v)$. Since the lattice operations in the direct product $G$ are computed componentwise, we only need to show that, for every $i\in \set{1,\dots,n}$, $x_i\wedge y_i=u_i$ and 
$x_i\vee y_i=v_i$. In fact, we only need to show that $x_i\wedge y_i\leq u_i$ and $x_i\vee y_i\geq v_i$ since the converse inequalities follow from the fact that $\phi$ is clearly order-preserving. Since $x_i$ and $y_i$ belong to the same chain, $\Ep i$, these two elements are comparable. They play a symmetrical role, whence we can assume that 
$x_i\leq y_i$. Thus, the equalities $x_i=x_i\wedge y_i$ and $y_i=x_i\vee y_i$ reduce our task  to show that  $x_i\leq u_i$ and $y_i\geq v_i$. Since $x_i\in \Ep i\cap \ideal x$
and $x_i\leq y_i$ yields that $x_i\in \Ep i\cap \ideal y$, 
we have that $x_i\in \Ep i\cap \ideal x\cap \ideal y=\Ep i\cap \ideal (x\wedge y)=\Ep i\cap \ideal u$. Taking into account that $u_i$ is the largest element of $\Ep i\cap \ideal u$, the required inequality $x_i\leq u_i$ follows. 
It belongs to the folklore of lattice theory (and it occurs in the last paragraph of the proof of Theorem 107 in
Gr\"atzer \cite{r:Gr-LTFound})  that
\begin{equation}
\parbox{8cm}{if $D$ is a finite distributive lattice, $t\in\Nplu$, $p\in\Jir D$, $q_1,\dots,q_t\in D$, and $p\leq q_1\vee\dots\vee q_t$, then there is an $i\in\set{1,\dots,t}$ such that $p\leq q_i$.}
\label{eq:fLktnhLdSrbK}
\end{equation}
Indeed, if the premise of \eqref{eq:fLktnhLdSrbK} holds, then 
$p=p\wedge(q_1\vee\dots\vee q_t)=(p\wedge q_1)\vee\dots\vee (p\wedge q_t)$ and $p\in \Jir D$ yield that $p=p\wedge q_i$ for some $i$, implying the required $p\leq q_i$.  
Resuming our argument for $\phi$,  we know that $v_i\leq v=x\vee y$ and $v_i\in \Ep i\subseteq \Jir L$. Hence
\eqref{eq:fLktnhLdSrbK} gives that $v_i\leq x$ or $v_i\leq y$.   If $v_i\leq x$, then the definition of $x_i$ yields that $v_i\leq x_i$, whence $v_i\leq y_i$. If $v_i\leq y$, then the definition of $y_i$ immediately yields that $v_i\leq y_i$. So the required $y_i\geq v_i$ holds in both cases, and we have shown that $\phi$ is a lattice homomorphism. 

Next, we claim that for each $x\in L$, 
\begin{equation}
x = x_1\vee\dots\vee x_n.
\label{eq:GmrSlrkLrSztbrkK}
\end{equation}
By finiteness, there is a subset $H$ of $\Jir L$ such that $x=\bigvee H$. For each $h\in H$, \eqref{pbx:szWrknCSkBrGlnD} and $\Jir L=C_1\cup\dots\cup C_n$ yield an $i\in\set{1,\dots,n}$ such that $h\in \Ep i$. Then we have that $h\in\Ep i\cap\ideal x$, whereby $h\leq x_i\leq x_1\vee\dots\vee x_n$. 
Since this holds for all $h\in H$, we have that $x=\bigvee H\leq  x_1\vee\dots\vee x_n$. The converse inequality is trivial, and we conclude \eqref{eq:GmrSlrkLrSztbrkK}.
Clearly, \eqref{eq:GmrSlrkLrSztbrkK} implies the injectivity of $\phi$. Thus, we have shown \eqref{pbx:zgmSzcskHt}.

Finally, \eqref{eqtxt:sHmlFksRb}, \eqref{pbx:zgmSzcskHt},   and Lemma~\ref{lemmaKzTfPsz}\eqref{lemmaKzTfPszb} imply that $f$ is a \covzo, completing the proof of  Lemma~\ref{lemma:nLskPcjktNLTn}.
\end{proof}

\begin{lemma}\label{lemma:notbOOle}
If $n\in\Nplu$, $L$ is an $n$-dimensional grid, but $L$ is not a boolean lattice, then $L$ is a sublattice of an $(n+1)$-dimensional grid $K$ such that $K$ and $L$ are of the same length.
\end{lemma}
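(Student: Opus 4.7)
The plan is to realize $L$ as a sublattice of a larger grid by splitting one sufficiently tall factor of $L$ into two shorter chains. By the definition of an $n$-dimensional grid, $L = C_1\times\dots\times C_n$ with each $C_i$ a nontrivial finite chain. Because $L$ is not boolean, at least one $C_i$ has length at least~$2$, and after reordering the factors I may assume this is $C_1 = \set{0 = a_0 \prec a_1 \prec \dots \prec a_k}$ with $k \geq 2$.

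The construction goes as follows. Fix any $j$ with $1 \leq j \leq k-1$, and let $D_1 := \set{d_0 \prec \dots \prec d_j}$ and $D_2 := \set{e_0 \prec \dots \prec e_{k-j}}$ be chains of lengths $j$ and $k-j$, respectively. Define an ``L-shaped'' map $\phi\colon C_1 \to D_1 \times D_2$ by sending $a_0,\dots,a_j$ to $(d_0,e_0),\dots,(d_j,e_0)$ and then $a_j,\dots,a_k$ to $(d_j,e_0),\dots,(d_j,e_{k-j})$; the two prescriptions agree at $a_j$. The map $\phi$ is injective and order-preserving, and its image is a chain of length $k$ in $D_1\times D_2$. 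Since meets and joins of comparable elements in any lattice coincide with the smaller and the larger of the two, the image of $\phi$ is automatically a sublattice of $D_1\times D_2$, so $\phi$ is a lattice embedding of $C_1$ into the $2$-dimensional grid $D_1\times D_2$ of the same length as $C_1$.

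Having traded the single chain $C_1$ of length $k$ for two shorter chains $D_1, D_2$ of total length $k$, I set $K := D_1 \times D_2 \times C_2 \times \dots \times C_n$ and extend $\phi$ coordinatewise (identity on the last $n-1$ factors) to a lattice embedding $L \hookrightarrow K$. Now $K$ is an $(n+1)$-dimensional grid: it is a product of $n+1$ nontrivial finite chains, so picking one atom from each factor yields an $(n+1)$-element antichain in $\Jir K$, while $\Jir K$ is itself a disjoint union of $n+1$ chains; hence $\width{\Jir K} = n+1$ and \eqref{eqtxt:dimDwJD} gives $\dim K = n+1$. Finally, $\length K = j + (k-j) + \sum_{i=2}^{n}\length{C_i} = \length L$, which is exactly what is required. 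There is no real obstacle in this argument; the only idea worth isolating is the L-shaped embedding, which lets us replace a canonical chain by two shorter ones at no cost in total length.
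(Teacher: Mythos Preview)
Your proof is correct, and the overall strategy---split a long factor chain into two shorter ones and take the product---is exactly what the paper does. The paper fixes the split point at the coatom of $C_1$ (so $D_1$ has length $k-1$ and $D_2$ has length $1$), whereas you allow an arbitrary $1\le j\le k-1$; this makes no difference.

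Where the two arguments diverge is in establishing the embedding $L\hookrightarrow K$. The paper invokes Hall--Dilworth gluing: it identifies $L$ as the gluing of the ideal $\ideal(q,\vec 1)$ and the filter $\filter(q,\vec 0)$, exhibits an isomorphic gluing inside $K$, and concludes via \eqref{pbx:wGhrSgcSrgsJ}. Your route is more elementary and more transparent: you write down the L-shaped chain embedding $C_1\to D_1\times D_2$ explicitly, observe that a chain in any lattice is automatically a sublattice, and then extend by the identity on the remaining factors. This avoids the gluing machinery entirely and makes the length computation immediate. The paper's approach has the mild advantage of reusing a standard structural tool (and of pinpointing the image as an explicit ideal-filter union in $K$), but for the purpose of this lemma your direct construction is both shorter and self-contained.
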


\begin{proof} By the assumption, $L=C_1\times\dots\times C_n$ such that $C_1$, \dots, $C_n$ are nontrivial chains and at least one of them consists of at least three elements. Up to isomorphism, the order of the direct factors is irrelevant, whereby we can assume that $|C_1|\geq 3$. Let $q$ be the unique coatom of $C_1$. Then $0<q\prec 1$ in $C_1$ and $E_0:=\filter q=\set{q,1}$ is a two-element subchain of $C_1$. 
The subchain $E_1:=\ideal q$ is still a nontrivial chain. 
Define $K:=E_0\times E_1\times C_2\times \dots\times C_n$. 
It is an $(n+1)$-dimensional grid. Since $\Jir L$ consists of the vectors with exactly one nonzero component and similarly for $\Jir K$,
$|\Jir L|=(|C_1|-1)+(|C_2|-1)+\dots+(|C_n|-1)= (|E_0|-1)+(|E_1|-1)+(|C_2|-1)+\dots+(|C_n|-1)=|\Jir K|$. Hence, \eqref{pbx:lJrsBlTlgR} gives that $L$ and $K$ are of the same length. We are going to show that $L$ can be embedded into $K$. 

Instead of defining an injective homomorphism $L\to K$ and verifying its properties in a tedious way, recall the following. If $H_1$ and $H_2$ are lattices, $F_1$ is a filter of $H_1$, $I_2$ is an ideal of $H_2$, and $\psi\colon F_1\to I_2$ is a lattice isomorphism, then the quintuplet $(H_1,H_2,F_1,I_2,\psi)$ uniquely determines a lattice $H$ by identifying $x$ with $\psi(x)$, for all $x\in F_1$, in  $H_1\cup H_2$. This $H$ is the well-known \emph{Hall--Dilworth gluing} of $H_1$ and $H_2$  or, to be more precise, the  Hall--Dilworth gluing determined by the quintuplet; see, for example, Gr\"atzer~\cite[Lemma 298]{r:Gr-LTFound} for more details. Furthermore, it is also well known, see  Gr\"atzer~\cite[Lemma 299]{r:Gr-LTFound}, that
\begin{equation}
\parbox{8.2cm}{if $M$ is  a lattice, $M_1$ is an ideal of $M$, $M_2$ is a filter of $M$, and $T:=M_1\cap M_2\neq \emptyset$, then $M_1\cup M_2$ is a sublattice of $M$ and $M$ is isomorphic to the Hall--Dilworth gluing determined by $(M_1,M_2, T, T,\id T)$,}
\label{pbx:wGhrSgcSrgsJ}
\end{equation}
where $\id T\colon T\to T$ is the \emph{identity map} defined by $x\mapsto x$.  

In the rest of \emph{this} proof,  $\vec 0$ and $\vec 1$ will stand for $(0_{C_2},\dots,0_{C_n})\in C_2\times\dots \times C_n$ and $(1_{C_2},\dots,1_{C_n})\in C_2\times\dots \times C_n$, respectively.  
In $L$, we let $I_L:=\ideal{(q,\vec 1\,)}$, $F_L:=\filter{(q,\vec 0\,)}$, and $T_L:=I_L\cap F_L=[ (q,\vec 0), (q,\vec 1) ]$. 
In $K$, we let $I_K:=\ideal{(q,q,\vec 1\,)}$; remember that the first $q$ here is the least element of $E_0$ while the second $q$ is the largest element of $E_1$. Still in $K$, we also let $F_K:=\filter{(q,q,\vec 0\,)}$ 
and $T_K:=I_K\cap F_K=[ (q,q,\vec 0\,), (q,q,\vec 1\,) ]$. 
Clearly, the map $\rho\colon I_L\to, I_K$ defined by $(x,\vec y\,)\mapsto (q,x,\vec y\,)$ is an isomorphism. Let
\begin{equation*}
\tau\colon F_L\to F_K\text{ be defined by } (x,\vec y\,)\mapsto (x,q,\vec y);
\end{equation*}
it is also an isomorphism. We have to check that each of the restrictions $\restrict \rho{T_L}$ and $\restrict \tau{T_L}$ are the same maps and they are $T_L\to T_K$ isomorphisms. But this is clear since $T_L=\set{ (q,\vec y\,): \vec y\in C_2\times\dots \times C_n}$ and $T_K=\set{ (q,q,\vec y\,): \vec y\in C_2\times\dots \times C_n}$. 
Hence, it follows from \eqref{pbx:wGhrSgcSrgsJ} that 
$I_K\cup F_K$ is a sublattice of $K$. It also follows from \eqref{pbx:wGhrSgcSrgsJ} that 
$L$, which is the Hall--Dilworth gluing determined by  $(I_L, F_L, T_L, T_L, \id{T_L})$, is isomorphic to this sublattice. Therefore, after replacing $K$ by an isomorphic copy if necessary, we conclude that $L$ is a sublattice of $K$, proving Lemma~\ref{lemma:notbOOle}.
\end{proof}

\subsection{Our lemmas at work}
Armed with our lemmas, we are ready to prove the main theorem of the paper and Corollaries~\ref{cor:DfnVGs}--\ref{cor:krSzwzlvnlZvl}. First, we disregard Corollary~\ref{cor:krSzwzlvnlZvl} in the proof below.

\begin{proof}[Proof of Theorem \ref{thmdst} and Corollaries~\ref{cor:DfnVGs}--\ref{cor-kvsnLptv}]
It follows from Proposition~\ref{prop} that 
\begin{equation}
\parbox{7.4cm}{\eqref{thmdsta},  \eqref{thmdstb}, and  \eqref{thmdstc} are equivalent in each of 
Theorem~\ref{thmdst}, Corollary~\ref{cor:DfnVGs}, and Corollary~\ref{cor-kvsnLptv}.}
\label{pbx:sChsWsZjMSvZlf}
\end{equation}

Next, we are going to prove that for $n\in\Nplu$ and a  \emph{finite} distributive lattice $D$,
\begin{align}
\parbox{8.4cm}{if $D\in \Dncovzo n$ is an absolute $\Hop$-retract for $\Dncovzo n$, then $D$ is boolean or $D$ is an $n$-dimensional grid;}
\label{pbx:whWbznTnka}
\\
\parbox{8.4cm}{if $D$ is boolean, then $D$  is an absolute retract for $\Dall$.}
\label{pbx:whWbznTnkb}
\\
\parbox{8.4cm}{if $D$ is an $n$-dimensional grid, then $D$  is an absolute retract for $\Dnfin n$;}
\label{pbx:whWbznTnkc}
\end{align}

To prove \eqref{pbx:whWbznTnka}, assume that $n\in\Nplu$ and  $D\in \Dncovzo n$ is an absolute $\Hop$-retract for $\Dncovzo n$.
For the sake of contradiction, suppose that $D$ neither boolean  nor it is an $n$-dimensional grid. 
The first task in the proof is to find a \emph{proper} $\Dncovzo n$-extension $K$ of $D$.
Let $k:=\dim D$. By Lemma~\ref{lemma:nLskPcjktNLTn}, $D$ has a $\Dncovzo n$-extension $L$ such that $L$ is a $k$-dimensional grid. There are three cases depending on $k$ and $L$. 

First, assume that $k<n$ and $L$ is boolean. Then $D\neq L$ since $D$ is not boolean. So if we let $K:=L$, then 
\begin{equation}
\parbox{6.0cm}{$K\in\Dncovzo n$, $K\neq D$, and $K$ is a $\Dncovzo n$-extension of $D$.
}\label{pbx:wmRsznkvTDfrKs}
\end{equation}

Second, assume that $k<n$ and $L$ is not boolean. Then \eqref{pbx:lnPrsHm} gives that $\length L=\length D$. Lemma~\ref{lemma:notbOOle} allows us to take a $(k+1)$-dimensional 
grid $K$ such that $\length K=\length L$ and $L$ is a sublattice of $K$. So $D$ is a sublattice of $K$ and $\length D=\length K$. 
Hence if we apply  Lemma~\ref{lemmaKzTfPsz}\eqref{lemmaKzTfPszb}
to the map $D\to K$ defined by $x\mapsto x$ and take  $\dim K= k+1\leq n$ into account, we obtain that $K$ is a $\Dncovzo n$-extension of $D$. Since $\dim D=k\neq \dim K$, we have that $D\neq K$ and so \eqref{pbx:wmRsznkvTDfrKs} holds again.

Third, assume that $k=n$, that is,   $\dim D=n$. Then, by  Lemma~\ref{lemma:nLskPcjktNLTn}, $D$ has a $\Dncovzo n$-extension $K$ such that $K$ is an $n$-dimensional grid.  Since we have assumed that $D$ is not an $n$-dimensional grid, \eqref{pbx:wmRsznkvTDfrKs} holds again.

We have seen that, in each of the three possible cases, \eqref{pbx:wmRsznkvTDfrKs}  holds. Since  $D\in
\Dncovzo n$ was assumed to be an absolute $\Hop$-retract for $\Dncovzo n$, there exists a retraction $f\colon K\to D$.
We know from \eqref{pbx:wmRsznkvTDfrKs} that the map $D\to K$ defined by $x\mapsto x$ is a \embcovzo. Hence $f$ is an isomorphism by Lemma~\ref{lemmaKzTfPsz}\eqref{lemmaKzTfPszc}, whereby $|K|=|D|$. This contradicts the fact that $D$ is a proper sublattice of $K$ by \eqref{pbx:wmRsznkvTDfrKs}, 
and we have proved \eqref{pbx:whWbznTnka}.

To prove \eqref{pbx:whWbznTnkb}, assume that  a finite boolean lattice $D$ is a sublattice of a not necessarily finite distributive lattice $K$. We are going to show that there exists a retraction $K\to D$. Since this is trivial if $D$ is a singleton, we can assume that $|D|>1$.  Let $n:=\dim D$.  Combining \eqref{eqtxt:dimDwJD} and \eqref{pbx:lJrsBlTlgR} and taking into account that the join-irreducible elements of a finite boolean lattice  are exactly its atoms, it follows that $D$ has exactly $n$ atoms and it is of length $n$. Hence, we can take a maximal chain
$C=\set{0=c_0, c_1,\dots, c_{n-1}, c_n=1}$ in $D$ such that $c_{i-1}\prec c_i$ for $i\in\set{1,\dots,n}$. For   $i\in\set{1,\dots,n}$, the Prime Ideal Theorem allows us to pick a prime ideal $I_i$ of $K$ such that $c_{i-1}\in I_i$ but $c_i\notin I_i$. Since $I_i$ is a prime ideal, the partition $\set{I_i,K\setminus I_i}$ determines a congruence $\Theta_i$ of $K$. This congruence \emph{separates} $c_{i-1}$ and $c_i$, that is, $(c_{i-1}, c_i)\notin \Theta_i$. Let $\Theta:=\bigcap\set{\Theta_i: i\in\set{1,\dots n}}$. Now $\Theta$  is a congruence of $K$ and its restriction $\restrict \Theta C$ is a congruence of the sublattice $C$. We claim that  $\restrict \Theta C=\diag C$; suppose the contrary. We know from the folklore that any congruence of a finite lattice is determined by the covering pairs it collapses, whence $(c_{i-1},c_i)\in\restrict \Theta C$ for some $i\in\set{1,\dots,n}$. But then $(c_{i-1},c_i)\in\restrict \Theta C \subseteq   \Theta\subseteq \Theta_i$, contradicting the fact that $\Theta_i$ separates $c_{i-1}$ and $c_i$. This shows that $\restrict \Theta C=\diag C$. Therefore, it follows from \eqref{pbx:gRnNt} and  
 $\restrict \Theta C=  \restrict{(\restrict \Theta D)} C$ that
\begin{equation}
\restrict \Theta D = \diag D.
\label{eq:tGHjszszDr}
\end{equation}
Observe that
\begin{equation}
\parbox{8.7cm}{if $\alpha$ and $\beta$ are congruences of a not necessarily finite lattice, $\alpha$ has exactly $m\in\Nplu$ blocks, and $\beta$ has exactly $n\in\Nplu$ blocks, then $\alpha\cap\beta$ has at most $mn$ blocks.}
\label{pbx:mkhszTvn}
\end{equation}
Indeed, each of the $m$ $\alpha$-blocks is cut into at most $n$ pieces by $\beta$. Since $\Theta_i$ has only two blocks, it follows from \eqref{pbx:mkhszTvn} that $\Theta$ has at most $2^n$ blocks. But the elements of $D$ belong to pairwise different $\Theta$-blocks by \eqref{eq:tGHjszszDr}, whereby $\Theta$ has exactly $2^n=|D|$ blocks.
Next, we define a map
\begin{equation}
f\colon K\to D\text{ by the rule }f(x)=d\in D\iff (x,d)\in\Theta.
\label{eq:bRhKlkPzLcsb}
\end{equation}
For later reference, we note that 
\begin{equation}
\parbox{7.7cm}{to show that $f$ in \eqref{eq:bRhKlkPzLcsb} is a retraction, we will only use that $K$ is a lattice,  $D$ is finite a sublattice of $K$, $\Theta$ has exactly $|D|$ blocks, and $\restrict \Theta D=\diag D$.}
\label{bpx:mzRdfZTrskgGN}
\end{equation}
Since $\Theta$ has exactly $2^n$ blocks,  $|D|=2^n$ and \eqref{eq:tGHjszszDr} guarantee the properties mentioned in  \eqref{bpx:mzRdfZTrskgGN}. The equality $\restrict \Theta D=\diag D$ yields that for each $x\in K$, there is at most one $d$ in \eqref{eq:bRhKlkPzLcsb}. 
If there was an $x\in K$ with its $\Theta$-block $x/\Theta$ disjoint from $D$, then $\Theta$ would have more than $|D|$-blocks since $x/\Theta$ would be different from the pairwise distinct blocks of the elements of $D$. Thus, for each $x\in K$, there is exactly one $d\in D$ with $(x,d)\in\Theta$, whereby 
\eqref{eq:bRhKlkPzLcsb} defines a map, indeed. If $f(x_1)=d_1$ and $f(x_2)=d_2$, then $(x_1,d_1)\in\Theta$ and $(x_2,d_2)\in\Theta$ yield that $(x_1\vee x_2,d_1\vee d_2)\in\Theta$, whence $f(x_1\vee x_2)=d_1\vee d_2\in D$. The same holds for meets, and so $f$ is a homomorphism. By the reflexivity of $\Theta$, $f(d)=d$ for all $d\in D$. Thus, $f$ is a retraction, proving \eqref{pbx:whWbznTnkb}.

Next, to prove \eqref{pbx:whWbznTnkc}, we begin with focusing on its simplest particular case.  Namely, we claim that 
\begin{equation} 
\parbox{5cm}{If $E$ is a subchain of a finite chain $C$, then
$E$ is a retract of $C$.}
\label{pbx:lNcSzrLrtRct}
\end{equation}
To see this, let $E=\set{ e_1, e_2,\dots e_{k}}$ such that 
$e_1<e_2<\dots<e_{k}$. Understanding the principal ideals below in $C$, it is trivial that the equivalence $\Theta$ with blocks $\ideal{e_1}$, $\ideal{e_2}\setminus \ideal{e_1}$, \dots, $\ideal{e_{k-1}}\setminus \ideal{e_{k-2}}$, $C\setminus \ideal{e_{k-1}}$ is a congruence of $C$. Since $\restrict \Theta E=\diag E$ and $\Theta$ has $|E|$ blocks, \eqref{bpx:mzRdfZTrskgGN} implies \eqref{pbx:lNcSzrLrtRct}.

Armed with \eqref{pbx:lNcSzrLrtRct}, assume that $n\in\Nplu$, $D$ is an $n$-dimensional grid, 
 $L\in \Dnfin n$, and $D$ is a sublattice of $L$. We are going to find a retraction $L\to D$. It follows from Milner and Pouzet \cite{milnerpouzet}, see Subsection~\ref{subsect:Dfdim} of the present paper, that $\dim D\leq\dim L$. 
Combining this inequality with $n=\dim D$ and $L\in \Dnfin n$, we obtain that $\dim L=n$. Hence, by 
 Lemma~\ref{lemma:nLskPcjktNLTn}, there is a \embcovzo{} of $L$ into an $n$-dimensional grid $K$. 
Then $D$ is a sublattice of $K$, and both $D$ and $K$ are $n$-dimensional grids. Let $C_1$, \dots, $C_n$ be the canonical chains of $K$. By Lemma \ref{lemma:gRd}, these canonical chains have nontrivial subchains $E_1$, \dots, $E_n$, respectively, such that \eqref{eq:dcEdjKwmC} holds (for $D$ in place of $L$). For $i\in\set{1,\dots,n}$, $\pi_i\colon K\to C_i$ defined by $x\mapsto \cj x i$ is a homomorphism by Lemma~\ref{lemma:cnChgR}\eqref{lemma:cnChgRc}. 
Since $\cj x i=0\vee\dots\vee 0\vee \cj x i \vee 0\vee\dots\vee 0$ (where $\cj x i$ is the $i$-th joinand on the right), the uniqueness  of the canonical form \eqref{eq:cFnfRmsG} gives that $\cj{(\cj x i)}i=\cj x i$. Hence, $\pi_i$ acts identically on $C_i$ and so $\pi_i$ is a retraction. Using \eqref{pbx:lNcSzrLrtRct}, we can take a retraction $g_i\colon C_i\to E_i$. Clearly, the composite map $f_i:=g_i\circ\pi_i$ is a retraction $K\to E_i$. 
For $x\in D$ , \eqref{eq:dcEdjKwmC} gives that $\cj x i\in E_i$. Hence, for $x\in D$ and $i\in\set{1,\dots,n}$, 
\begin{equation}
f_i(x)=g_i(\pi_i(x))=g_i(\cj x i) =\cj x i.
\label{eq:vszmsNgTPrsTb}
\end{equation}

Let $\Theta_i$ be the kernel of $f_i$. Since $f_i$, as any retraction, is surjective, $\Theta_i$ has exactly $|E_i|$ blocks. Therefore, if we let $\Theta:=\bigcap\set{\Theta_i: i\in\set{1,\dots,n}}$, then $\Theta$ is a congruence of $K$ with at most $|E_1|\cdots |E_n|=|D|$ blocks by \eqref{pbx:mkhszTvn}. On the other hand, if $(x,y)\in \Theta$ holds for $x,y\in D$, then $(x,y)\in \Theta_i$ and \eqref{eq:vszmsNgTPrsTb} give that $\cj x i = f_i(x)=f_i(y)=\cj y i$ for all $i\in\set{1,\dots,n}$, whence it follows from \eqref{eq:cFnfRmsG} that $x=y$. This means that $\restrict \Theta D=\diag D$. Thus, $\Theta$ has at least $|D|$ blocks, and  obtain that $\Theta$ has exactly $|D|$-blocks.
Therefore, \eqref{eq:bRhKlkPzLcsb} and \eqref{bpx:mzRdfZTrskgGN} imply that there is a retraction $f\colon K\to D$. Since the restriction $\restrict f L\colon L\to D$, defined by $x\mapsto f(x)$, is clearly a retraction, we have shown the existence of a retraction $L\to D$, as required.
 This completes the proof \eqref{pbx:whWbznTnkc}.

For categories $\alg X$ and $\alg Y$, we say that $\alg X$ is a 
\emph{subcategory} of $\alg Y$ if every object of $\alg X$ is an object of $\alg Y$ and every morphism  of $\alg X$ is a morphism of $\alg Y$. The following two observations are trivial.
\begin{align}
\parbox{9cm}{If $\alg X$ and $\alg Y$ are categories of lattices such that  $\alg X$ is a subcategory of $\alg Y$ and a lattice $L\in\alg X$ is an absolute $\Hop$-retract for $\alg Y$, then $L$ is also an absolute $\Hop$-retract also for $\alg X$.}\label{pbx-hdNrzKfmCka}\\
\parbox{9cm}{An absolute retract for a category of lattices is also an absolute $\Hop$-retract for that category.}
\label{pbx-hdNrzKfmCkb}
\end{align}

For Theorem~\ref{thmdst}, in virtue of \eqref{pbx:sChsWsZjMSvZlf}, it suffices to show that \ref{thmdst}\eqref{thmdstc} and \ref{thmdst}\eqref{thmdstd} are equivalent conditions. Assume \ref{thmdst}\eqref{thmdstc}, that is, let $D\in \Dnfin n$ be an absolute retract of $\Dnfin n$. By \eqref{pbx-hdNrzKfmCkb}, $D$ is an absolute $\Hop$-retract for $\Dnfin n$.  If  $n\in\Nplu$, then  \eqref{pbx-hdNrzKfmCka} gives that $D$ is an absolute $\Hop$-retract of $\Dncovzo n$, whereby  \eqref{pbx:whWbznTnka} yields \ref{thmdst}\eqref{thmdstd}, as required. So we can assume that  $n=\omega$. Denote $\dim D$ by $k$. Then $D\in\Dncovzo{k+1}$, and    \eqref{pbx-hdNrzKfmCka} gives that $D$ is an absolute $\Hop$-retract of $\Dncovzo {k+1}$. By \eqref{pbx:whWbznTnka}, $D$ is boolean or $D$ is a $(k+1)$-dimensional grid. The second alternative is ruled out by $\dim D=k$, whence  \ref{thmdst}\eqref{thmdstd} holds for $D$. We have seen that   \ref{thmdst}\eqref{thmdstc} implies  \ref{thmdst}\eqref{thmdstd}.

Conversely, assume that \ref{thmdst}\eqref{thmdstd} holds for  finite distributive lattice $D$.  If $D$ is boolean, then it is an absolute $\Hop$-retract for $\Dall$ by \eqref{pbx:whWbznTnkb} and \eqref{pbx:rmDhwhspKlQB}, whereby \ref{thmdst}\eqref{thmdstc} holds for $D$ by \eqref{pbx-hdNrzKfmCka} and \eqref{pbx:rmDhwhspKlQB}. If $D$ is an $n$-dimensional grid, then \eqref{pbx:whWbznTnkc} immediately implies that  \ref{thmdst}\eqref{thmdstc} holds for $D$. We have proved Theorem~\ref{thmdst}.  

Next, assume that a finite distributive lattice $D$ satisfies \ref{cor:DfnVGs}\eqref{cor:DfnVGsc}, that is, $D$ is an absolute retract for $\Dall$. Let $n:=\dim D$. Combining \eqref{pbx:rmDhwhspKlQB} and \eqref{pbx-hdNrzKfmCka}, we obtain that 
$D$ is an absolute $\Hop$-retract for $\Dncovzo{n+1}$. 
By \eqref{pbx:whWbznTnka}, $D$ is boolean or it is an $(n+1)$-dimensional grid. But $\dim D=n$ excludes the second alternative, and we conclude that $D$ satisfies  \ref{cor:DfnVGs}\eqref{cor:DfnVGsd}. This shows implication \ref{cor:DfnVGs}\eqref{cor:DfnVGsc} $\Rightarrow$ \ref{cor:DfnVGs}\eqref{cor:DfnVGsd}. Since the converse implication is just \eqref{pbx:whWbznTnkb}, we have verified Corollary~\ref{cor:DfnVGs}.

For Corollary~\ref{cor-kvsnLptv},  observe that \ref{cor-kvsnLptv}\eqref{cor-kvsnLptvd} $\Rightarrow$ \ref{cor-kvsnLptv}\eqref{cor-kvsnLptvc} by   \eqref{pbx-hdNrzKfmCkb}
while  \ref{cor-kvsnLptv}\eqref{cor-kvsnLptvc}  $\Rightarrow$ 
\ref{cor-kvsnLptv}\eqref{cor-kvsnLptve} is just \eqref{pbx:whWbznTnka}. So we only need to show that  \ref{cor-kvsnLptv}\eqref{cor-kvsnLptve}  $\Rightarrow$ 
\ref{cor-kvsnLptv}\eqref{cor-kvsnLptvd}. Assume that  $D\in\Dncovzo n$  satisfies \ref{cor-kvsnLptv}\eqref{cor-kvsnLptve}. 
There are two cases depending on whether $D$ is boolean or not. First, assume that $D\in \Dncovzo n$ is boolean. Then $D$ is an absolute $\Hop$-retract for $\Dall$ by \eqref{pbx:whWbznTnkb} and  \eqref{pbx:rmDhwhspKlQB}. We obtain from  \eqref{pbx-hdNrzKfmCka} that $D$ is as absolute $\Hop$-retract for $\Dncovzo n$. Second, assume that $D$ is an $n$-dimensional grid. Then $D$ is an absolute $\Hop$-retract for $\Dncovzo n$ by
 \eqref{pbx:whWbznTnkc} and \eqref{pbx-hdNrzKfmCkb}.
So in both cases,  $D$ is an absolute $\Hop$-retract for $\Dncovzo n$.  Let $K$ be a $\Dncovzo n$-extension of $D$. Since $D$ is an absolute $\Hop$-retract for $\Dncovzo n$, there exists a retraction $f\colon K\to D$. By Lemma~\ref{lemmaKzTfPsz}\eqref{lemmaKzTfPszc}, $f$ is a morphism of $\Dncovzo n$. This shows that \ref{cor-kvsnLptv}\eqref{cor-kvsnLptvd} holds for $D$, as required.  
We have verified Corollary~\ref{cor-kvsnLptv}, and the proof is complete.
\end{proof}

\begin{proof}[Proof of Corollary~\ref{cor:krSzwzlvnlZvl}]
By Proposition 5.2 of Kelly and Rival~\cite{kellyrival},  a finite lattice is planar if and only if its order dimension is at most 2. Hence, the class of planar distributive lattices is $\Dnfin 2$. 
Thus, the equivalence of \ref{cor:krSzwzlvnlZvl}\eqref{cor:krSzwzlvnlZvla}
and \ref{cor:krSzwzlvnlZvl}\eqref{cor:krSzwzlvnlZvld} follows from Theorem~\ref{thmdst} while the equivalence of 
\ref{cor:krSzwzlvnlZvl}\eqref{cor:krSzwzlvnlZvlb}, \ref{cor:krSzwzlvnlZvl}\eqref{cor:krSzwzlvnlZvlc}, and \ref{cor:krSzwzlvnlZvl}\eqref{cor:krSzwzlvnlZvld} is a consequence of Corollary~\ref{cor-kvsnLptv}.
\end{proof}

\color{black}

\end{document}